\documentclass[11pt,a4paper]{amsart}

\usepackage[T1]{fontenc}
\usepackage{lmodern}
\usepackage{microtype}
\usepackage{amsmath,amssymb,mathtools}
\usepackage[hidelinks]{hyperref}

\hypersetup{
  pdftitle={Polynomial Compressibility and Forbidden Oriented Forests},
  pdfauthor={Zhenhua Lyu},
  pdfsubject={Homomorphism compressibility of acyclic oriented graphs},
  pdfkeywords={oriented graph, graph homomorphism, compressibility,
    forbidden oriented forest, relative oriented clique}
}

\allowdisplaybreaks

\newtheorem{theorem}{Theorem}[section]
\newtheorem{lemma}[theorem]{Lemma}
\newtheorem{corollary}[theorem]{Corollary}
\theoremstyle{remark}
\newtheorem{remark}[theorem]{Remark}

\newcommand{\tourn}[1]{\vec{T}_{#1}}
\newcommand{\dpath}[1]{\vec{P}_{#1}}
\newcommand{\dcycle}[1]{\vec{C}_{#1}}
\newcommand{\Fturn}{F_{\mathrm{turn}}}
\newcommand{\Falt}{F_{\mathrm{alt}}}
\newcommand{\Fdir}{F_{\mathrm{dir}}}
\newcommand{\Nplus}{N^{+}}
\newcommand{\Nminus}{N^{-}}
\newcommand{\dplus}{d^{+}}
\newcommand{\dminus}{d^{-}}
\newcommand{\ttr}{r_{\mathrm{tr}}}
\newcommand{\girth}{\operatorname{girth}}
\newcommand{\omro}{\omega_{\mathrm{ro}}}
\newcommand{\omao}{\omega_{\mathrm{ao}}}

\title[Polynomial compressibility and oriented forests]
  {Polynomial Compressibility and Forbidden Oriented Forests}
\author{Zhenhua Lyu}
\address{School of Science, Shenyang Aerospace University,
  Shenyang 110136, China}
\email{lyuzhh@outlook.com}

\subjclass[2020]{05C15, 05C20, 05C35}
\keywords{Oriented graph, graph homomorphism, compressibility,
forbidden oriented forest, relative oriented clique}

\begin{document}

\begin{abstract}
For a nonempty acyclic oriented graph $H$, let $p(H)$ be the order of a longest
directed path and let $\tau(H)$ be the least positive integer $n$ such that $H$
admits a homomorphism to every tournament of order $n$.  For all $p\ge3$
and $g\ge1$, we construct a connected acyclic oriented graph $H$ with
underlying girth greater than $g$, absolute and relative oriented clique
numbers equal to three, and
\[
  p(H)=p,\qquad \tau(H)=r_{\mathrm{tr}}(p),
\]
where $r_{\mathrm{tr}}(p)=2^{\Theta(p)}$ is the tournament Ramsey number
for a transitive $p$-vertex tournament.  This disproves the conjectured
polynomial bounds under bounded absolute or relative oriented clique
number.  It also shows that a forbidden graph can yield a polynomially
$\tau$-bounded class only if its underlying graph is a forest.  For fixed
$g$, the least order of these examples is bounded by a polynomial in $p$.  A separate
construction gives maximum in- and outdegree $O(p^2)$, uniformly in $g$.
For $p=4$, the least order is $2^{\Theta(g)}$.
We also establish polynomial $\tau$-boundedness for every orientation of
the two four-vertex trees.  The pure-claw case follows from the known
$O(p^4)$ bound.  We obtain the bound $2p-2$ for mixed claws and one-turn
orientations of $P_4$ when $p\ge2$, and bounds $4$ and $3p-2$ for the
directed and alternating orientations of $P_4$, respectively.  In the
alternating case, $\tau(H)=p(H)$ when the underlying graph is triangle-free.
\end{abstract}

\maketitle

\section{Introduction}

All graphs and digraphs in this paper are finite.
An \emph{oriented graph} is a loopless digraph with no pair of opposite
arcs, and a \emph{tournament} is an orientation of a complete graph.  A
homomorphism $f\colon H\to T$ is a vertex map that preserves every arc.

The \emph{compressibility} $\tau(H)$ is the least positive integer $n$ such
that $H\to T$ for every tournament $T$ of order $n$; we put $\tau(H)=\infty$
if no such integer exists.  This parameter was
introduced by Valadkhan~\cite{Valadkhan2009} in connection with oriented
Tur\'{a}n problems.  In particular, when $2\le\tau(H)<\infty$, it determines the
asymptotic oriented Tur\'{a}n number by
\[
  \operatorname{ex}_{\mathrm{o}}(n,H)
  =\left(1-\frac{1}{\tau(H)-1}\right)\binom n2+o(n^2).
\]
The parameter is finite exactly for acyclic oriented graphs.  For a nonempty
acyclic graph $H$, let $p(H)$ be the maximum number of vertices on a directed path.
The longest-path rank map gives $H\to\tourn{p(H)}$, whereas the directed
path of order $p(H)$ cannot map to $\tourn{p(H)-1}$.  Consequently,
\begin{equation}\label{eq:universal-lower}
  \tau(H)\ge p(H).
\end{equation}

Let
\[
  \ttr(p):=\min\{n:\text{every tournament of order $n$ contains }
  \tourn p\}.
\]
Since every acyclic $H$ with $p(H)=p$ maps to $\tourn p$, we have the
universal upper bound
\begin{equation}\label{eq:universal-upper}
  \tau(H)\le\ttr(p(H)).
\end{equation}

Gerbner, Hu, and Sun~\cite{GerbnerHuSun2026} determined the exact oriented
Tur\'{a}n number for every forbidden oriented graph with at most three arcs,
provided the host order is sufficiently large.  Here we fix an oriented
graph $F$ and ask whether $\tau(H)$ is polynomially bounded in $p(H)$
among $F$-free acyclic oriented graphs.
Wang and Lu~\cite{WangLu2025} determined the compressibility of directed
paths with separated additional arcs.  Their triangle-chain examples will
also be used below.

Two vertices are \emph{linked} if they are the endpoints of a directed path
of length at most two, in either direction.  A pairwise linked set is a
\emph{relative oriented clique}; its maximum order is $\omro(H)$.  An
\emph{oriented clique} is an oriented graph in which every two vertices are
linked within the graph, and $\omao(H)$ is the maximum order of an oriented
clique occurring as a subgraph of $H$.  Thus $\omao(H)\le\omro(H)$; see
Sopena~\cite{Sopena2016} for background.

For $k\ge3$, Grzesik, Jaworska, Kielak, Novik, and
\'{S}lusarczyk~\cite{GrzesikEtAl2024} considered
\[
\begin{aligned}
  \mathcal A_k&=\{H:H\text{ is acyclic and }\omao(H)\le k\},\\
  \mathcal R_k&=\{H:H\text{ is acyclic and }\omro(H)\le k\},
\end{aligned}
\]
and conjectured that both classes are polynomially $\tau$-bounded: there
are constants $c,d>0$, depending only on $k$, such that
$\tau(H)\le c\,p(H)^d$ for every nonempty graph in the class.  The following theorem
disproves this conjecture for every $k\ge3$.

\begin{theorem}\label{thm:high-girth}
For every pair of integers $p\ge3$ and $g\ge1$, there is a connected
acyclic oriented graph $H_{p,g}$ satisfying
\[
  \girth(U(H_{p,g}))>g,
  \qquad
  \omao(H_{p,g})=\omro(H_{p,g})=3,
\]
and
\[
  p(H_{p,g})=p,
  \qquad
  \tau(H_{p,g})=\ttr(p).
\]
\end{theorem}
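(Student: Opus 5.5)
The plan is to prove the two inequalities separately. The upper bound $\tau(H_{p,g})\le\ttr(p)$ is free from \eqref{eq:universal-upper} once we know $p(H_{p,g})=p$. For the lower bound it suffices to fix one tournament $T_0$ of order $\ttr(p)-1$ that contains no $\tourn p$, and to build $H$ with $H\not\to T_0$. So the real task is: for fixed $p$, $g$ and a fixed $\tourn p$-free tournament $T_0$, find a high-girth acyclic oriented graph $H$ with $H\to\tourn p$ but $H\not\to T_0$. Note that $H\to\tourn p$ forces $p(H)\le p$.

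For this I would invoke the sparse incomparability lemma for digraph homomorphisms (Nešetřil--Rödl, Nešetřil--Zhu), applied to $G=\tourn p$ and the bound $n=\ttr(p)-1$. It gives, for every $\ell$, a digraph $D$ with underlying girth greater than $\ell$, a homomorphism $D\to\tourn p$, and the property that $D\to T$ if and only if $\tourn p\to T$ for every digraph $T$ on at most $n$ vertices.
\begin{itemize}
\item Since $\tourn p\to T_0$ fails (a homomorphism from a tournament is injective, so it would embed $\tourn p$ in $T_0$), we get $D\not\to T_0$.
\item Since $D\to\tourn p$ and $\tourn p$ is acyclic and loopless, $D$ is an acyclic oriented graph with $p(D)\le p$.
\end{itemize}
If one prefers not to quote the lemma, I would prove it in its standard form: take a random or Erdős--Hajnal-type $p$-partite structure of high girth over the vertex classes of $\tourn p$, with arcs oriented from lower to higher class. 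One then shows that every map to a fixed small $T$ that avoids monochromatic configurations factors through $\tourn p$. This verification is the main obstacle, and is where high chromatic number of the auxiliary hypergraph is used.

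Next comes cleanup. Pick a connected component of $D$ that still does not map to $T_0$; one exists, because if every component mapped to $T_0$, then $D$ would too. If its longest directed path has order less than $p$, attach a directed path $\dpath p$ at one vertex by identifying a single vertex. This keeps connectivity and girth, keeps $H\to\tourn p$ (map the path along the ranks consistently; choose the attachment vertex to be a path endpoint and rank-shift the path so it fits), and cannot create a homomorphism to $T_0$. Then $p(H)=p$.

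Finally the clique numbers. If the girth exceeds $\max(g,8)$, four pairwise linked vertices would close a cycle of length at most $8$ in $U(H)$, because each link is a path of length at most $2$ and the six links among four vertices contain a cycle. Hence $\omro(H)\le3$. Since $p\ge3$, $H$ contains a directed $2$-path, whose vertices form an oriented clique, so $3\le\omao(H)\le\omro(H)\le3$.

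Combining, we get $\ttr(p)\le\tau(H)\le\ttr(p)$. The step I expect to need the most care is the sparse incomparability input: it must preserve orientation and acyclicity, not just the underlying graph. It is safest to re-derive it for the specific target $\tourn p$ using a high-girth, high-chromatic $p$-uniform hypergraph whose edges are realised as transitive $p$-vertex gadgets. Those gadgets must themselves be made sparse (tree-like) by replacing each internal arc with linked directed paths, so that the girth condition survives.
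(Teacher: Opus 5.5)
Your architecture matches the paper's: sparse incomparability applied to $S=\tourn p$ against a $\tourn p$-free tournament of order $\ttr(p)-1$, then a weak component that still fails to map, then the sandwich $\ttr(p)\le\tau\le\ttr(p)$. The genuine gap is in the clique step. You argue that the six links among four pairwise linked vertices ``contain a cycle'' because each link is a path of length at most two. That is false for undirected paths. The four leaves of a star $K_{1,4}$, or the center and three leaves of a $K_{1,3}$, are pairwise at distance at most two, yet the union of their connecting paths is a tree; the closed walk of length at most $8$ through four links merely backtracks. You never use that links are \emph{directed} paths, and that is exactly what excludes these configurations.

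The paper's Lemma~\ref{lem:relative-clique} supplies the missing idea. With $\girth(U(H))>12$, the union of the six linking paths has at most twelve edges and is therefore a tree, so each link is the unique tree path. The minimal subtree spanning the four vertices has diameter at most two, so it is a star with at least three distinguished leaves at its center. Two of those three edges point the same way relative to the center, so the tree path between the corresponding leaves is not directed, a contradiction. Your threshold $8$ can be kept by running this star-and-pigeonhole argument on each triple, whose three links span at most six edges. Every triple then induces in $U(H)$ a path of length two centered at one of its members, and this forces the four vertices to induce a $4$-cycle in $U(H)$. But some orientation argument of this kind is indispensable.

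Three smaller points. First, the path attachment is unnecessary. A $\tourn p$-free tournament $T_0$ of order $\ttr(p)-1$ contains $\tourn{p-1}$, since otherwise adding a dominating vertex would give a $\tourn p$-free tournament of order $\ttr(p)$. Hence a component with $p(C)\le p-1$ would map to $T_0$ through its rank map, and $p(C)=p$ is automatic. Second, if you do attach, there is no room to ``rank-shift'' a copy of $\dpath p$ inside $\tourn p$. You must identify its initial vertex with a source of $C$; otherwise $p(H)$ can exceed $p$. Third, the optional gadget re-derivation of the incomparability lemma in your last paragraph is too vague to rely on: a copy of $\tourn p$ on each hyperedge already contains triangles. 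Quoting the published lemma, as the paper does with Kun's theorem, is the right move.
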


Together with~\eqref{eq:universal-upper}, Theorem~\ref{thm:high-girth} gives
\begin{equation}\label{eq:exact-maximum}
\begin{aligned}
  \max_{\substack{H\in\mathcal R_3\\p(H)=p}}\tau(H)
  &=\max_{\substack{H\in\mathcal A_3\\p(H)=p}}\tau(H)\\
  &=\max_{\substack{H\text{ acyclic}\\p(H)=p}}\tau(H)
   =\ttr(p)
\end{aligned}
\end{equation}
for every $p\ge3$.  The same identity holds if all three maxima are
restricted to connected graphs whose underlying girth exceeds any prescribed
$g$.

The classical tournament Ramsey bounds give
\begin{equation}\label{eq:classical-ramsey}
  2^{p/2-O(1)}\le\ttr(p)\le2^{p-1};
\end{equation}
see Erd\H{o}s and Moser~\cite[Section~1]{ErdosMoser1964} and
Manoussakis and Tuza~\cite{ManoussakisTuza2001}.
Thus the largest compressibility at path order $p$ is exponential even
in $\mathcal R_3$.  The clique value three is best possible for $p\ge3$,
since three consecutive vertices of a directed path form an oriented clique.

Let $m_g(p)$ be the least order of a graph in Theorem~\ref{thm:high-girth},
and let $d_g(p)$ be the least possible value of
$\max\{\Delta^+(H),\Delta^-(H)\}$ among these graphs.  For
$h=\max\{g,12\}$, Theorem~\ref{thm:quantitative-lifts} gives
\[
  m_g(p)\le C_h p^{3h-1},
  \qquad
  d_g(p)\le96p(p-1),
\]
where $C_h$ depends only on $h$.  The two bounds come from separate
constructions, and the degree bound is uniform in $g$.  An entropy estimate
improves the degree bound from $O(p^3)$ to $O(p^2)$ compared with a direct
union bound over all maps.  For the dependence on $g$,
Lemma~\ref{lem:girth-order-lower} gives
\[
  m_g(4)=2^{\Theta(g)}.
\]

\begin{corollary}\label{cor:clique-conjecture}
For every $k\ge3$, neither $\mathcal A_k$ nor $\mathcal R_k$ is
polynomially $\tau$-bounded.  For every prescribed $g$, the conclusion
already holds among connected graphs of underlying girth greater than $g$.
Moreover, if $F$ is an acyclic oriented graph and $U(F)$ contains a cycle,
then the class of $F$-free acyclic oriented graphs is not polynomially
$\tau$-bounded.
\end{corollary}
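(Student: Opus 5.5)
The corollary follows from Theorem~\ref{thm:high-girth} together with the classical lower bound in \eqref{eq:classical-ramsey}; no new construction is needed.

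\textbf{First assertion.} Fix $k\ge3$ and suppose, for contradiction, that $\mathcal R_k$ is polynomially $\tau$-bounded with constants $c,d$. For every $p\ge3$, the graph $H_{p,1}$ of Theorem~\ref{thm:high-girth} has $\omro(H_{p,1})=3\le k$, so it lies in $\mathcal R_k$. Hence
\[
\ttr(p)=\tau(H_{p,1})\le c\,p^d .
\]
This contradicts $\ttr(p)\ge 2^{p/2-O(1)}$ once $p$ is large. Since $\omao\le\omro$, we have $\mathcal R_k\subseteq\mathcal A_k$, so the same graphs also refute polynomial boundedness of $\mathcal A_k$. For a prescribed $g$, use $H_{p,g}$ in place of $H_{p,1}$. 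These graphs are connected and have underlying girth greater than $g$, which gives the strengthened statement.

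\textbf{Second assertion.} Let $F$ be acyclic with $U(F)$ containing a cycle, and let $g$ be the length of a shortest such cycle. Any subgraph of $H_{p,g}$ isomorphic to $F$ would place a cycle of length $g$ in $U(H_{p,g})$. This is impossible, because $\girth(U(H_{p,g}))>g$. So every $H_{p,g}$, for $p\ge3$, is $F$-free. It is also acyclic, and $\tau(H_{p,g})=\ttr(p)$ grows exponentially in $p(H_{p,g})=p$. The same comparison with \eqref{eq:classical-ramsey} then shows that no polynomial bound $c\,p^d$ can hold on the class of $F$-free acyclic oriented graphs.

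\textbf{Remarks.} The only point needing care is what ``$F$-free'' means. Under the standard meaning (no subgraph isomorphic to $F$), containing $F$ as an induced subgraph also forces a subgraph copy, so the girth argument covers the induced reading as well. The comparison step is routine: the lower bound $2^{p/2-O(1)}$ eventually exceeds any $c\,p^d$. Nothing here is a real obstacle; all the difficulty sits in Theorem~\ref{thm:high-girth}.
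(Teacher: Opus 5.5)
Your proposal is correct and follows essentially the same route as the paper: the graphs $H_{p,g}$ of Theorem~\ref{thm:high-girth} lie in $\mathcal R_3\subseteq\mathcal R_k\subseteq\mathcal A_k$, and by \eqref{eq:classical-ramsey} their compressibility is exponential in $p$; a girth threshold at least the length of some cycle in $U(F)$ then makes them $F$-free. The only cosmetic difference is that you take $g$ to be the girth of $U(F)$, while the paper takes $g=|V(F)|$, and both choices work.
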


The forbidden-subgraph problem posed in~\cite{GrzesikEtAl2024} therefore
reduces to oriented forests.  We prove polynomial $\tau$-boundedness for
all orientations of the two four-vertex trees.  For $a+b=3$, let $F_{a,b}$
be the orientation of $K_{1,3}$ whose center has indegree $a$ and outdegree $b$.  Up to reversing all
arcs and reversing the order of the underlying path, the three orientations
of $P_4$ are
\[
\begin{aligned}
  \Fdir&:\quad a\to b\to c\to d,\\
  \Fturn&:\quad a\to b\to c\leftarrow d,\\
  \Falt&:\quad a\to b\leftarrow c\to d.
\end{aligned}
\]
Here and throughout, $F$-free means that no subgraph is isomorphic to $F$;
the copy need not be induced.

\begin{theorem}\label{thm:four-vertex-trees}
Let $H$ be a nonempty acyclic oriented graph and put $p=p(H)$.
\begin{enumerate}
  \item If $H$ is $F_{0,3}$-free or $F_{3,0}$-free, then
  $\tau(H)=O(p^4)$.  If $H$ is $F_{1,2}$-free or $F_{2,1}$-free and
  $p\ge2$, then $\tau(H)\le2p-2$.
  \item If $H$ is $\Fdir$-free, then $p\le3$ and $\tau(H)\le4$.
  If $H$ is $\Fturn$-free and $p\ge2$, then $\tau(H)\le2p-2$.
  If $H$ is $\Falt$-free, then $\tau(H)\le3p-2$; if in addition
  $U(H)$ is triangle-free, then $\tau(H)=p$.
\end{enumerate}
The same conclusions hold for the orientations obtained by reversing all
arcs.
\end{theorem}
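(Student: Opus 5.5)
The plan is to treat the six forbidden orientations one at a time. For each, I would first extract the structure that the forbidden tree imposes on $H$, and then build a homomorphism into an arbitrary tournament $T$ of the stated order. Arc reversal is a bijection on tournaments and preserves $p$, so it suffices to handle $F_{0,3}$, $F_{1,2}$, $\Fdir$, $\Fturn$ and $\Falt$; the reversed statements follow by applying the result to the reversed graph and the reversed tournament.

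\emph{Easy cases.} If $H$ is $\Fdir$-free, then $H$ has no directed path with three arcs, so $p\le3$. Then \eqref{eq:universal-upper} gives $\tau(H)\le\ttr(3)=4$. If $H$ is $F_{0,3}$-free, every vertex has outdegree at most two. This is the bounded-outdegree setting, and I would quote the known $O(p^4)$ bound for it directly.

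\emph{Mixed claws and $\Fturn$.} Suppose $H$ is $F_{1,2}$-free. Then every vertex of positive indegree has outdegree at most one. So $H$ minus its set $S$ of sources is an in-forest $F$, and each source sends arcs into $F$. Let $T$ have order $2p-2$. I would take a median order $v_1,\dots,v_n$ of $T$ and use its feedback property: each $v_i$ dominates at least half of $v_{i+1},\dots,v_j$ for every $j>i$. Define $\rho(x)$ to be the number of vertices on a longest directed path starting at $x$, so $\rho$ takes values in $\{1,\dots,p\}$. I would then embed $F$ level by level, mapping vertices with $\rho=k$ into a window of the median order roughly indexed by $2k-2,2k-1$. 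The feedback property should ensure that the image of each vertex of $F$ has an out-neighbour at the image of its unique out-neighbour; the factor two in $2p-2$ comes from the windows of size two.

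The sources are the main obstacle. A source $u$ may have many out-neighbours in $F$, and their images must all lie in $N^+(f(u))$. I would first try to choose the images of $F$ inside each window compatibly: process from the sinks upward, and map a source last into the earliest window position below all its out-neighbours. The hope is that the median-order property ``$v_i$ beats more than half of any interval after it'' still leaves a valid choice. If this fails, I would fall back to induction on $p$. Delete the sinks of $H$, embed the rest into $T$ minus two suitably chosen vertices, and reattach the sinks using a vertex of in-degree at least $(n-1)/2$.

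For $\Fturn$-freeness, any vertex $b$ on a directed path $a\to b\to c$ has $c$ with no second in-neighbour outside the path. This gives the same kind of ``unique continuation'' structure, and the same median-order embedding should give $2p-2$.

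\emph{The case $\Falt$.} If $H$ is $\Falt$-free, then for every arc $b\to$ whose tail $c\to b$ has another out-neighbour $d$, the vertex $b$ has no other in-neighbour $a$. I would show that in the triangle-free case every component is then either a star-like orientation or a directed path with pendant arcs attached only at its ends. Such a graph maps onto a Hamiltonian path of any $p$-vertex tournament by the longest-path rank map, which gives $\tau=p$ together with \eqref{eq:universal-lower}. Allowing triangles adds transitive triangles glued along this structure. Each of them can be absorbed by tripling the target: take a Hamiltonian path of $T$ on $3p-2$ vertices, and find the required triangles among consecutive triples of the median order. This yields $3p-2$.
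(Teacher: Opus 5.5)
Your handling of $\Fdir$, of $F_{0,3}$ via the known bounded-outdegree bound, and of arc reversal is fine. You also correctly note that $H-S$ is an in-forest in the $F_{1,2}$ case. But for $F_{1,2}$ and $\Fturn$, the step you flag yourself, placing the sources, is never resolved. A median order only guarantees that $v_i$ beats at least half of any later interval. Once the out-neighbours of a source are spread over several windows, there need be no vertex dominating all their images, and the fallback induction is not specified. The missing idea is to put all of $H-S$ into a single out-neighbourhood:
\begin{itemize}
\item arcs of $H-S$ join consecutive levels, and the first vertex of a longest path in $H-S$ has a predecessor in $H$, so $H-S\to\dpath q$ with $q\le p-1$;
\item in a tournament of order $2p-2$, a vertex $t$ of maximum outdegree has at least $p-1$ out-neighbours, which contain a directed path of order $q$ by R\'edei's theorem;
\item map $H-S$ onto that path and the independent set $S$ to $t$.
\end{itemize}
This is exactly the paper's proof for $\Fturn$. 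For mixed claws the paper instead maps $H$ to $\{s\}\Rightarrow\dcycle3$ and shows that every tournament of order $2p-2$ contains this graph or $\tourn p$. For $\Fturn$, your ``unique continuation'' remark must also be turned into the actual structure. Two non-sources $x,y$ cannot share an out-neighbour $v$: if $x$ precedes $y$ topologically and $z\to x$, then $z\to x\to v\leftarrow y$ is a copy of $\Fturn$. Hence $H-S$ has indegree at most one inside itself.

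For $\Falt$, the triangle-free structure you propose is false, and the mapping step fails with it. Take the graph with arcs $e_1\to e_2\to e_4\to e_5$ and $e_1\to e_3\to e_5$. It is $\Falt$-free and acyclic, its underlying graph is a $5$-cycle, and $p=4$. Yet it has no homomorphism to $\dpath4$: the rank map sends $e_3\to e_5$ to the second and fourth vertices of your Hamiltonian path, and that arc of $T$ may point backwards. What is needed is the line-digraph representation. Every arc $u\to v$ has $\dplus(u)=1$ or $\dminus(v)=1$. So the bipartite graph on the ports $v^-,v^+$, with an edge $u^+v^-$ for each arc $u\to v$, has star components. This gives $H=L(Q)$ for an acyclic multigraph $Q$ with $p(Q)=p+1$, hence $H\to\mathcal S_{p+1}$. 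Finally, $\mathcal S_{n+1}\to T$ for every tournament $T$ of order $n$, by induction splitting $T$ at a vertex $x$ into $\Nminus(x)$ and $\Nplus(x)$.

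For the general case, your local observation must allow the extra out-neighbour of $u$ to coincide with the extra in-neighbour of $v$. This gives a unique middle vertex $w$ with $\dminus(w)=\dplus(w)=1$. Deleting all middle vertices leaves a triangle-free $\Falt$-free graph $G$ with $\tau(G)=q:=p(G)$. Your absorption step does not work, because three consecutive vertices of a median order may form a cyclic triangle. The paper instead uses the transition graph $\Gamma(T)$, which has maximum degree at most two. So a tournament of order $3q-2$ has a $\Gamma$-independent set $S$ of order $q$. The paper maps $G$ into $T[S]$ and sends each middle vertex to some $z$ with $\varphi(u)\to z\to\varphi(v)$. Such a $z$ exists because $\varphi(u)\varphi(v)\notin E(\Gamma(T))$.
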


The pure-claw assertion follows from the bounded-outdegree result
of~\cite[Section~3]{GrzesikEtAl2024}.
For the alternating path, Wang and Lu's triangle chains
$H_q$~\cite[Section~4]{WangLu2025} satisfy
\[
  p(H_q)=2q-1,
  \qquad
  \tau(H_q)=3q-2=\frac{3p(H_q)-1}{2}.
\]
These graphs are $\Falt$-free, as verified in Section~\ref{sec:sharpness},
so a general linear bound for this class must have coefficient at least $3/2$.

The high-girth construction uses Kun's sparse incomparability
theorem~\cite{Kun2013}.  Kayll and Morris~\cite{KayllMorris2023} proved a
related result for oriented graphs with large directed girth.
The four-vertex-tree proofs are elementary.
For a mixed claw, we map $H$ to a fixed four-vertex tournament.  For the
one-turn path, deleting the sources leaves an out-forest.  In the
alternating case, a local degree restriction gives a line-digraph
representation when the underlying graph is triangle-free.  Deleting the
middle vertices of transitive triangles then reduces the general case
to this one.  Section~\ref{sec:sharpness} discusses sharpness and further
questions.

\section{Preliminaries}\label{sec:preliminaries}

For a vertex $v$, let $\Nplus(v)$ and $\Nminus(v)$ be its out- and
in-neighborhoods, and let $\dplus(v)$ and $\dminus(v)$ be the corresponding
degrees.  The underlying simple graph of an oriented graph $H$ is denoted by
$U(H)$.  We write $\tourn k$ for the transitive tournament of order $k$,
with vertex set $[k]$ and arcs $ij$ when $i<j$, and $\dpath k$ and
$\dcycle k$ for the directed path and directed cycle of order $k$.

Assigning to each vertex $v$ of an acyclic oriented graph the order of a
longest directed path ending at $v$ gives the rank homomorphism
\begin{equation}\label{eq:rank-map}
  H\longrightarrow\tourn{p(H)}.
\end{equation}
The defining property of $\tau$ is monotone in the tournament order: once
$H$ maps to every tournament of order $m$, it maps to every larger
tournament by first choosing an $m$-vertex subtournament.  Reversing all
arcs preserves both $p(H)$ and $\tau(H)$.

By R\'edei's theorem~\cite{Redei1934}, every nonempty tournament has
a directed Hamiltonian path.

For disjoint oriented graphs $A$ and $B$, let $A\Rightarrow B$ be their
union together with all arcs directed from $A$ to $B$.

\begin{lemma}\label{lem:relative-clique}
If an oriented graph $H$ has $\girth(U(H))>12$, then $\omro(H)\le3$.
\end{lemma}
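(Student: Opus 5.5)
Suppose $S$ is a relative oriented clique with $|S|\ge4$. Every two vertices of $S$ are linked, so their distance in $U(H)$ is at most $2$. The plan is to show that this forces a short cycle in $U(H)$, contradicting $\girth(U(H))>12$. In fact a cycle of length at most $6$ appears, so the hypothesis is far from tight. It suffices to take four vertices $u_1,u_2,u_3,u_4\in S$ and, for each pair $i<j$, fix a linking path $P_{ij}$ in $U(H)$ of length $1$ or $2$. The orientations play no further role; only the underlying paths matter.

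\textbf{Short closed walks give short cycles.} A closed walk in $U(H)$ that traverses some edge an odd number of times contains a cycle, and its length is at most the length of the walk. Equivalently, the union of edge sets with odd multiplicity is a nonempty even subgraph, hence contains a cycle. Applying this to the triangle walk $P_{12}P_{23}P_{31}$, of length at most $6$, we get that if this walk uses some edge an odd number of times, then $U(H)$ has a cycle of length at most $6<13$. So we may assume that for every triple $\{i,j,k\}$ the three paths cancel modulo $2$. We will use this to derive a contradiction.

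\textbf{The tree case.} Under this assumption, the union $T$ of all six paths $P_{ij}$ is a forest. Indeed, any cycle in $T$ lies in the cycle space, which is generated by these triangle walks when one builds it carefully. The simpler route is to argue directly: if $T$ contains a cycle, pick one. It has length at most $6\cdot2=12$ edges, and a cycle cannot repeat vertices, so its length is at most $12<13$. This is exactly why the bound $12$ appears. Hence $T$ is a forest, and since $u_1$ is joined to everything, $T$ is a tree. In this tree the $u_i$ are pairwise at distance at most $2$. Distances in a tree form a tree metric, so the geodesics $P_{ij}$ are unique. Four points pairwise within distance $2$ in a tree all lie within distance $1$ of a common vertex $c$ (a center argument, using the Helly property of balls in trees). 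Thus some vertex $c$ exists with each $u_i$ equal to $c$ or adjacent to $c$.

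\textbf{The orientation step, which I expect to be the main obstacle.} If $c=u_i$ for some $i$, then the other three $u_j$ are neighbors of $u_i$ and pairwise at distance $2$ through $u_i$. Each such pair must be linked by a directed $2$-path through $u_i$, because the tree geodesic is unique. This requires the pair to lie on opposite sides, one in $\Nminus(u_i)$ and one in $\Nplus(u_i)$. Three vertices cannot be pairwise split between two sides, a contradiction. Otherwise all four $u_i$ are neighbors of $c$, and the same pigeonhole argument applies with four vertices. In both cases $|S|\le3$.

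The main care point is to justify that the linking paths are forced to be the tree geodesics through $c$. This holds because a cycle-free union of short paths has unique paths between any two vertices.
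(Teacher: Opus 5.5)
Your argument is correct and is essentially the paper's: the union of the six linking paths has at most twelve edges, so it is a tree; a vertex $c$ that equals or is adjacent to every $u_i$ then has at least three of them as neighbours, and two of those lie on the same side of $c$. The only difference is that you find $c$ via the Helly property of balls in a tree, whereas the paper takes the minimal subtree spanning the four vertices and observes that it is a star.

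The opening paragraph on triangle walks is unnecessary. As written, it also does not establish your claim that girth $>6$ already suffices: the triangle walks need not generate the cycle space of the union, since middle vertices of different linking paths can coincide. Nothing later in your proof depends on that paragraph.
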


\begin{proof}
Suppose that $x_1,x_2,x_3,x_4$ form a relative oriented clique.  For each of
the six pairs, choose a directed path of length at most two linking its
endpoints, and let $F$ be the union of the underlying edges of these paths.
The graph $F$ is connected and has at most twelve edges.  Since
$\girth(U(H))>12$, it is a tree.

Every chosen linking path is now the unique path between its distinguished
endpoints, so the four $x_i$ are pairwise at distance at most two in $F$.
Their minimal connecting subtree $F_0$ has only distinguished leaves.  Its
diameter is at most two and hence $F_0$ is a star.  Its center is either one
of the four distinguished vertices or a fifth vertex; in either case at
least three distinguished leaves are incident with the center.  Among their
three incident edges, two point in the same direction relative to the
center.  The unique path between the corresponding leaves is not directed,
contradicting the choice of its linking path.
\end{proof}

We use the following form of Kun's sparse incomparability
theorem~\cite[Theorem~1]{Kun2013}; see also the digraph formulation
in~\cite[Theorem~7]{GuzmanProMartin2025}.

\begin{theorem}\label{thm:sil}
Let $S$ be an oriented graph, and let $t\ge2$ and $g\ge1$ be integers.
There is an oriented graph $G$ such that
\[
  G\to S,\qquad \girth(U(G))>g,
\]
and, for every oriented graph $X$ with $|V(X)|<t$,
\[
  G\to X\quad\Longleftrightarrow\quad S\to X.
\]
\end{theorem}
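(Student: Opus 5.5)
The plan is to read Theorem~\ref{thm:sil} off Kun's theorem for general finite relational structures~\cite[Theorem~1]{Kun2013}, specialised to the signature of a single binary relation. In that setting Kun gives the following. For every finite structure $S$ and all integers $\ell,k$, there is a finite structure $G$ such that
\[
  G\to S,\qquad \girth(G)>\ell,
\]
and, for every structure $X$ with at most $k$ elements, $G\to X$ holds if and only if $S\to X$. Here the girth is the hypergraph girth of $G$, in which each tuple of a relation is treated as a hyperedge. I would apply this with $S$ viewed as a digraph, $k=t-1$ and $\ell=\max\{g,2\}$. The condition $|V(X)|<t$ then matches $|V(X)|\le k$ exactly. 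Since oriented graphs are in particular digraphs, the equivalence for all $X$ of order less than $t$ specialises to oriented targets.

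The remaining work is to check that the structure $G$ produced is genuinely an oriented graph and that its girth condition translates into $\girth(U(G))>g$. I would carry out the following checks.

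\begin{enumerate}
  \item \emph{No loops.} A loop $vv$ in $G$ would be sent by the homomorphism $G\to S$ to a loop of $S$. But $S$ is oriented and so has no loops.
  \item \emph{No opposite arcs.} A pair of opposite arcs $uv,vu$ with $u\ne v$ consists of two distinct hyperedges sharing two vertices. In Kun's convention this is a Berge cycle of length two, which is excluded because $\ell\ge2$.
  \item \emph{Girth of $U(G)$.} Any cycle of $U(G)$ of length $m\ge3$ lifts to a Berge cycle of length $m$ in $G$: choose, for each edge of the cycle, the unique arc realising it. Therefore $\girth(U(G))>\ell\ge g$.
\end{enumerate}

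The only delicate point, and the step I expect to be the main obstacle, is matching conventions precisely. First, one must confirm that Kun's notion of girth really counts two hyperedges meeting in two points as a $2$-cycle; this holds in~\cite{Kun2013} and also in the digraph formulation of~\cite[Theorem~7]{GuzmanProMartin2025}. Second, one must confirm that the size bound on the test structures $X$ is strict or non-strict as stated there. If either convention differs, I would adjust the parameters, for example by replacing $k$ with $t$ and $\ell$ with $\max\{g,2\}+1$, since increasing $k$ or $\ell$ only strengthens the conclusion.

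With these identifications, the statement of Theorem~\ref{thm:sil} follows directly. No further construction is needed beyond citing the theorem.
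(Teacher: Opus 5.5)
Your proposal is correct and matches the paper's approach: the paper proves nothing here beyond citing Kun's Theorem~1 (and a digraph reformulation), and you specialise that theorem to a single binary relation in the same way, bounding the girth of $U(G)$ by the hypergraph girth. One simplification: your item~2 does not need Kun's girth convention, because if $u\to v\to u$ in $G$, the homomorphism $G\to S$ would force either a loop or a pair of opposite arcs in $S$, so $G\to S$ alone already makes $G$ an oriented graph.
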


Kayll and Morris~\cite[Theorem~1.1]{KayllMorris2023} proved a related
result for oriented graphs, stated in terms of directed girth.

\section{Exact high-girth realizations}\label{sec:high-girth}

\begin{proof}[Proof of Theorem~\ref{thm:high-girth}]
Fix $p\ge3$ and $g\ge1$, and put $r=\ttr(p)$ and $h=\max\{g,12\}$.
Choose a $\tourn p$-free tournament $Q$ of order $r-1$.
It contains $\tourn{p-1}$: otherwise adding a dominating vertex would
give a $\tourn p$-free tournament of order $r$.

A homomorphism between tournaments is injective, so
$\tourn p\nrightarrow Q$.  Applying Theorem~\ref{thm:sil} with
$S=\tourn p$, $t=r$, and girth threshold $h$ gives an oriented graph
$G$ such that
\[
  G\to\tourn p,\qquad G\nrightarrow Q,\qquad \girth(U(G))>h.
\]
The first property makes $G$ acyclic.  The second gives a weak component
$C$ with $C\nrightarrow Q$.  Since $C\to\tourn p$, we have $p(C)\le p$.
If $p(C)\le p-1$, the rank map and $\tourn{p-1}\to Q$ would give
$C\to Q$.  Thus $p(C)=p$.  Moreover, the obstruction $C\nrightarrow Q$
and~\eqref{eq:universal-upper} give
\[
  r\le\tau(C)\le\ttr(p)=r.
\]
The component $C$ has underlying girth greater than $h$, so
Lemma~\ref{lem:relative-clique} gives $\omro(C)\le3$.
Three consecutive vertices of a longest directed path form an oriented
clique, hence $\omao(C)=\omro(C)=3$.  Taking $H_{p,g}=C$ proves the theorem.
\end{proof}

\begin{remark}\label{rem:core}
The graph $H_{p,g}$ may be chosen to be a core.  Take the image $K$ of a
retraction onto a core.  Homomorphic equivalence preserves $p$ and $\tau$;
as a subgraph, $K$ preserves the girth and clique upper bounds, while
$p(K)\ge3$ forces equality in both clique bounds.  A homomorphic image of a
connected graph is connected.
\end{remark}

\begin{proof}[Proof of Corollary~\ref{cor:clique-conjecture}]
For every fixed $g$, Theorem~\ref{thm:high-girth}
and~\eqref{eq:classical-ramsey} give graphs $H_{p,g}\in\mathcal R_3$
with $p(H_{p,g})=p$ and $\tau(H_{p,g})=2^{\Theta(p)}$.
Since $\mathcal R_3\subseteq\mathcal R_k\subseteq\mathcal A_k$ for
$k\ge3$, neither class is polynomially $\tau$-bounded.
If $U(F)$ contains a cycle, take $g=|V(F)|$.  The same graphs are
$F$-free, since a copy of $F$ would contain a cycle of length at most $g$.
\end{proof}

\begin{remark}
The same argument applies to a finite nonempty family $\mathcal F$ if every
$U(F)$, $F\in\mathcal F$, contains a cycle: take
$g=\max_{F\in\mathcal F}|V(F)|$.
\end{remark}

The in- and outdegrees of these examples must grow with $p$.

\begin{lemma}\label{lem:degree-lower}
Every acyclic oriented graph $H$ with $p(H)=p\ge3$ and
$\tau(H)=\ttr(p)$ satisfies
\[
  \Delta^+(H)
  =\Omega\!\left(\frac{p}{(\log p)^2}\right),
  \qquad
  \Delta^-(H)
  =\Omega\!\left(\frac{p}{(\log p)^2}\right).
\]
\end{lemma}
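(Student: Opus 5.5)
The plan is to argue by contraposition. I will show that small maximum outdegree forces $\tau(H)$ to be subexponential in $p$, and then compare this with the lower bound $\ttr(p)\ge 2^{p/2-O(1)}$ from~\eqref{eq:classical-ramsey}. The indegree statement follows by reversing all arcs, since reversal preserves $p(H)$ and $\tau(H)$ and swaps $\Delta^+$ with $\Delta^-$. So it suffices to treat $\Delta^+$.

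The key input is a quantitative version of the bounded-outdegree bound of~\cite[Section~3]{GrzesikEtAl2024}, the same result invoked for the pure-claw case of Theorem~\ref{thm:four-vertex-trees}. I would extract, or reprove, an estimate of the shape
\[
  \tau(H)\le 2^{C\,d\,(\log p)^2}\qquad\text{whenever }\Delta^+(H)\le d,\ p(H)=p,
\]
with $C$ an absolute constant. The exact shape matters only through the exponent being $O(d(\log p)^2)$.

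To prove such an estimate directly, I would embed $H$ level by level. Let $\ell(v)$ be the order of a longest directed path starting at $v$, so that every out-neighbour of $v$ has a smaller level. Inside a tournament $T$ of order $n$, I would try to find nested pools and sets $A_1,A_2,\dots,A_p$ with the following property: every set of at most $d$ vertices in $A_1\cup\dots\cup A_{i-1}$ has a common in-neighbour in $A_i$. Then a greedy map in increasing order of $\ell$ sends each $v$ into $A_{\ell(v)}$, choosing a common in-neighbour of the images of its at most $d$ out-neighbours.

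The sets would come from a recursive halving in the style of Erd\H{o}s–Moser. A vertex of outdegree at least $(|P|-1)/2$ splits the pool $P$. A random or averaging choice of a small set inside a pool shows that every $d$-subset of a set of size $s$ is dominated by about a $2^{-d}$ fraction of the pool, after discarding few vertices. Organising the $p$ levels as a balanced binary recursion of depth $\log p$, with each step costing a factor $2^{O(d\log p)}$ in pool size, would give the claimed exponent.

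Granting the estimate, suppose $\Delta^+(H)\le d$ with $d=c\,p/(\log p)^2$. Then $\tau(H)\le 2^{Cc\,p}$, which is smaller than $2^{p/2-O(1)}\le\ttr(p)$ once $c<1/(2C)$ and $p$ is large. For the finitely many small $p$, the bound is trivial after adjusting the implied constant, since $\Delta^+(H)\ge1$. This contradicts $\tau(H)=\ttr(p)$, so $\Delta^+(H)\ge c\,p/(\log p)^2$.

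The main obstacle is the quantitative bounded-outdegree bound itself, in particular getting the dependence on $d$ to be linear in the exponent. A naive greedy embedding that fixes each level by passing to common in-neighbourhoods loses a factor $2^{d}$ per level, giving $2^{dp}$, which is useless. The $(\log p)^2$ loss has to come from a divide-and-conquer over the levels, so that each vertex pays the $2^{O(d)}$ domination cost only $O(\log p)$ times, with an extra $\log p$ from the union bound over subsets. I would first check whether the argument of~\cite[Section~3]{GrzesikEtAl2024} already yields this form when the constants are tracked, and fall back on the halving construction above if it does not.
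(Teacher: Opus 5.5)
Your proposal matches the paper's proof: the paper cites the bounded-outdegree estimate $\tau(H)\le(kp)^{ck\log p}$ from \cite[Section~3]{GrzesikEtAl2024} (a modification of Fox--He--Wigderson). For $k\le p$ this is exactly your shape $2^{O(k(\log p)^2)}$, and the case $k>p$ is immediate; the paper then compares it with the exponential lower bound on $\ttr(p)$ and reverses arcs for $\Delta^-$, as you do. Since that estimate is already in the literature, your halving sketch is unnecessary, and as written it would not by itself establish the quasi-polynomial bound.
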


\begin{proof}
There is an absolute constant $c>0$ such that every acyclic oriented graph
$H$ with $p(H)=p$ and $\Delta^+(H)\le k$ satisfies
\begin{equation}\label{eq:quasipoly-degree}
  \tau(H)\le(kp)^{ck\log p};
\end{equation}
this bound is recorded in~\cite[Section~3]{GrzesikEtAl2024} and is obtained
by modifying the proof of Fox, He, and
Wigderson~\cite[Theorem~1.4]{FoxHeWigderson2024}.  Put
$k=\Delta^+(H)$.  Equation~\eqref{eq:classical-ramsey} and the hypothesis on $\tau(H)$
give, for all sufficiently large $p$,
\[
  2^{p/3}\le\tau(H)\le(kp)^{ck\log p}.
\]
If $k\le p$, take logarithms and use $\log(kp)\le2\log p$ to get
$k=\Omega(p/(\log p)^2)$; if $k>p$, the conclusion is immediate.  Apply
the same argument to the arc reversal of $H$ to obtain the indegree
bound.
\end{proof}

\section{Quantitative bounds}\label{sec:quantitative-lifts}

For $p\ge3$ and $g\ge1$, let $\mathcal H_g(p)$ be the family of connected
acyclic oriented graphs $H$ such that
\[
  \girth(U(H))>g,\qquad
  \omao(H)=\omro(H)=3,
\]
and
\[
  p(H)=p,\qquad \tau(H)=\ttr(p).
\]
This family is nonempty by Theorem~\ref{thm:high-girth}.  Define
\[
  m_g(p):=\min_{H\in\mathcal H_g(p)}|V(H)|,
  \qquad
  d_g(p):=\min_{H\in\mathcal H_g(p)}
  \max\{\Delta^+(H),\Delta^-(H)\}.
\]
Lemma~\ref{lem:degree-lower} gives, uniformly in $g$,
\[
  d_g(p)=\Omega\!\left(\frac{p}{(\log p)^2}\right).
\]

\begin{theorem}\label{thm:quantitative-lifts}
Put $h=\max\{g,12\}$.  There is a constant $C_h$, depending only on
$h$, such that, for every $p\ge3$,
\[
  m_g(p)\le C_h p^{3h-1},
  \qquad
  d_g(p)\le96p(p-1)<96p^2.
\]
The two estimates are not asserted to hold for a single graph.
\end{theorem}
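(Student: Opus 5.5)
The plan is to give two separate probabilistic constructions, both modelled on the proof of Theorem~\ref{thm:high-girth}, but replacing the qualitative Theorem~\ref{thm:sil} by an explicit random \emph{lift} of $\tourn p$. Fix $p\ge3$ and put $r=\ttr(p)$ and $h=\max\{g,12\}$. In both constructions $G$ has vertex set $V_1\cup\dots\cup V_p$, and every arc goes from some $V_i$ to some $V_j$ with $i<j$. Hence $G\to\tourn p$, $G$ is acyclic, and $p(G)\le p$. The proof of Theorem~\ref{thm:high-girth} then goes through verbatim once three properties are checked:
\begin{enumerate}
\item $\girth(U(G))>h$;
\item $G\nrightarrow Q$ for every $\tourn p$-free tournament $Q$ of order $r-1$;
\item the required bound on $|V(G)|$, respectively on $\max\{\Delta^+,\Delta^-\}$.
\end{enumerate}
Indeed, a weak component $C$ with $C\nrightarrow Q$ then lies in $\mathcal H_g(p)$, by Lemma~\ref{lem:relative-clique} and the rank-map argument. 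Passing to $C$ only lowers the order and the degrees.

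\textbf{The key reduction.} The step I expect to be the main obstacle is (ii) with only polynomially many vertices, although $r$ is exponential in $p$. A naive union bound over all maps $f\colon V(G)\to V(Q)$ costs $(r-1)^{|V(G)|}=2^{O(p|V(G)|)}$ and would be hopeless for the order bound. To get around this, I would first reduce the non-homomorphism property to a \emph{pattern} property independent of $Q$. Given a homomorphism $f\colon G\to Q$, the image of $f$ cannot contain a copy of $\tourn p$. Using that $Q$ is a tournament, I would find a ``bad'' structure of bounded description in $G$. The natural candidate is the following. Iteratively choose a colour class of $f$ on $V_1$ with large relative size. Its image vertex $x_1$ must then dominate the images of all out-neighbours in the later parts. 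Recursing inside the out-neighbourhood of $x_1$, we would obtain $x_1\to x_2\to\dots$, a transitive tournament in $Q$ of order $p$, as long as each step retains a $1/\mathrm{poly}(p)$ fraction of the next part. Thus (ii) reduces to an \emph{expansion} property: for all $i<j$ and all sets $A\subseteq V_i$, $B\subseteq V_j$ of relative density at least $\varepsilon=1/\mathrm{poly}(p)$, there is an arc from $A$ to $B$. The property must also persist along the nested restrictions, so I would require it for all pairs simultaneously. This is a bounded union bound over subsets rather than over maps.

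\textbf{Order bound.} Take each $V_i$ of size $N$, and put each arc $V_i\to V_j$ independently with probability $q$ chosen so that expected degrees are about $N^{1/(h)}$-ish. The aim is that the expected number of cycles of length at most $h$ is $o(N)$, while the density-$\varepsilon$ expansion holds with high probability by Chernoff bounds. Deleting one vertex from each short cycle keeps each part almost intact. The expansion is robust to this, since we may take $\varepsilon$ slightly larger. Balancing $q^{h}N^{h}\cdot p^{h}\ll N$ against $qN\varepsilon^2\gtrsim\log$ terms yields $N=C_h p^{O(h)}$. I would tune the exponents to reach $3h-1$ for $p|V_i|$.

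\textbf{Degree bound.} Here we do not need small order, so the parts can be huge. I would use random regular bipartite graphs (or random $d$-lifts) between $V_i$ and $V_j$ with $d\approx 96(p-1)$ per later part. This gives out- and in-degree at most $96p(p-1)$, and large girth can be arranged by taking $N$ huge and deleting short cycles, with negligible effect. For (ii), I would not use set expansion. Instead I would use the entropy/union bound alluded to in the introduction. Fix the ``type'' of a map, namely the distribution of $f$ on each $V_i$ over $V(Q)$. Then bound the probability that a random $d$-regular bipartite graph is $f$-compatible by $\exp(-d\,N\cdot D)$, where $D$ is a KL-type divergence. Show that $\tourn p$-freeness of $Q$ forces $D\ge c/p$ on some pair of parts, via the greedy argument above phrased for distributions. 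Finally, the number of maps with a given type contributes $\exp(N\cdot H(\text{type}))\le\exp(N\log r)$, with $\log r\le p$. Thus $d\cdot c/p>p$ suffices, which is the source of $d=O(p)$ per pair and $O(p^2)$ total. Getting the constant $96$ and making the divergence lower bound rigorous is where I expect the real work to lie.
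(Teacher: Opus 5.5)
\textbf{Verdict.} Your frame is the paper's: a layered random graph, short cycles destroyed, and a weak component $C$ with $C\nrightarrow Q$ taken as in Theorem~\ref{thm:high-girth}. Both probabilistic cores, however, have genuine gaps.

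\textbf{Order bound.} The reduction to set expansion fails at its first step. The map $f$ may use all $r-1=2^{\Theta(p)}$ colours on $V_1$, so the largest colour class you can guarantee has relative size $1/(r-1)$, not $1/\mathrm{poly}(p)$. Expansion at density $\varepsilon$ forces most vertices to have about $1/\varepsilon$ out-neighbours in each later layer. At density $1/(r-1)$ the degrees are therefore exponential in $p$, and girth greater than $h$ then forces exponential order. Nor can an argument using only $\tourn p$-freeness of the target get by with polynomial-density expansion. Colour each layer at random by a quasirandom $\tourn p$-free tournament $Q'$ of order $2^{\Theta(p)}$ and keep only colour-respecting random arcs. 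With $\mathrm{poly}(p)$ degrees there is an arc from every polynomially dense set to every polynomially dense set in a later layer, yet the graph maps to $Q'$.

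The obstacle you were avoiding is not real. For any $f$, a uniformly random transversal of the layers has a nonforward image pair, so at least $n^2$ potential arcs are bad for $f$. With arc probability $\lambda/n$, fewer than $\lambda n/2$ of them are selected with probability at most $e^{-\lambda n/8}$. This and the union-bound cost $q^{pn}$, $q=r-1$, are both exponential in $n$, so $\lambda=16p\log q=O(p^2)$ handles all maps at once. Put $D=p\lambda$ and $n=\lceil8D^h/\lambda\rceil$. Markov's inequality then leaves fewer than $\lambda n/2$ cycles of length at most $h$. Deleting one arc from each keeps a bad arc for every map, and $pn=O_h(p^{h+1}\lambda^{h-1})=O_h(p^{3h-1})$.

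\textbf{Degree bound.} Your accounting does not give $O(p)$ per pair: $d\cdot c/p>p$ already means $d=\Omega(p^2)$. Moreover, a type consists of $p$ distributions, so the number of maps of a given type is $\exp\bigl(N\sum_iH(\mu_i)\bigr)$, which can reach $\exp(Np\log(r-1))$. The real requirement is then $d\gtrsim p^3$ per pair. Replacing your $c/p$ by the trivial bound $\Phi_Q\ge1$ on the violation $\Phi_Q(\mu_1,\dots,\mu_p)=\sum_{i<j}\Pr(X_i\not\to X_j)$ only recovers the direct union bound, total degree $O(p^3)$.

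The missing idea is a trade-off between entropy and violation:
\[
  \sum_{i=1}^pH(\mu_i)\le(4p-4)(\log2)\,\Phi_Q(\mu_1,\dots,\mu_p).
\]
It comes from a pointwise estimate. Fix position $i$ and the other coordinates $y$, and let $W_i(y)$ be the number of nonforward pairs among them. Let $A_i(y)$ be the set of values at position $i$ making every pair through $i$ forward. It contains no $\tourn{s+1}$, where $s\le W_i(y)$ is the size of a vertex cover of those pairs; otherwise inserting such a copy into the forward remainder of $y$ yields $\tourn p$. Hence $|A_i(y)|\le2^{W_i(y)}$. One then splits $\mu_i$ over $A_i(y)$ and its complement and averages over independent $y$.

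Exactly $n^2\Phi_Q$ potential arcs are bad for a map with empirical profile $(\mu_i)$. So the choice $\lambda=24p$ makes the failure probability $e^{-\lambda n\Phi_Q/8}=e^{-3pn\Phi_Q}$ beat $e^{n\sum_iH(\mu_i)}$ for each of the at most $(n+1)^{pq}$ profiles. Put $D=(p-1)\lambda$. Deleting the arcs at vertices of degree above $4D$, together with one arc per short cycle, removes at most $\lambda n/8$ arcs and gives the bound $96p(p-1)$.
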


The quadratic degree bound uses the following entropy estimate for
tournament targets.  All entropies are computed with natural logarithms.

\begin{lemma}\label{lem:entropy-violation}
Let $Q$ be a tournament containing no $\tourn p$, and let
$\mu_1,\ldots,\mu_p$ be probability distributions on $V(Q)$.  Let
$X_i\sim\mu_i$ be independent random variables and put
\[
  \Phi_Q(\mu_1,\ldots,\mu_p)
  :=\sum_{1\le i<j\le p}\Pr(X_i\not\to X_j),
\]
where $X_i=X_j$ counts as nonforward.  Then
\[
  \Phi_Q(\mu_1,\ldots,\mu_p)\ge1
\]
and
\[
  \sum_{i=1}^p H(\mu_i)
  \le (4p-4)(\log2)\Phi_Q(\mu_1,\ldots,\mu_p).
\]
\end{lemma}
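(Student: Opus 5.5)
The first inequality is a union bound.  Sample the $X_i$ independently.  If every pair $i<j$ satisfied $X_i\to X_j$, then the $X_i$ would be distinct and would span a copy of $\tourn p$ in $Q$, which is impossible.  So with probability one some pair is nonforward.  Writing $W=\sum_{i<j}\mathbf 1[X_i\not\to X_j]$ for the number of violated pairs, we get $W\ge1$ almost surely, and hence $\Phi_Q=\mathbb E W\ge1$.

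For the entropy bound I will use the Gibbs variational inequality.  For any random variable $X$ on a finite set $\Omega$, any $f\colon\Omega\to\mathbb R$ and any $\lambda\ge0$,
\[
  H(X)\le\lambda\,\mathbb E f(X)+\log\sum_{x\in\Omega}e^{-\lambda f(x)} .
\]
Apply this with $X=(X_1,\dots,X_p)$, $\Omega=V(Q)^p$, $f(x)=W(x)$ (the number of pairs $i<j$ with $x_i\not\to x_j$) and $\lambda=(4p-4)\log2$.  By independence, $H(X)=\sum_iH(\mu_i)$.  The whole lemma therefore reduces to a purely combinatorial partition-function estimate, which no longer involves the $\mu_i$:
\[
  Z_p(Q):=\sum_{x\in V(Q)^p}2^{-(4p-4)W(x)}\le1
  \quad\text{for every $\tourn p$-free tournament }Q .
\]
If this holds, then $\sum_iH(\mu_i)\le(4p-4)(\log2)\,\Phi_Q$, as claimed.

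To prove $Z_p(Q)\le1$, I would group tuples by their violation count: let $N_k$ be the number of $x\in V(Q)^p$ with $W(x)=k$.  Then $N_0=0$, and it suffices to show $N_k\le 2^{(4p-4)k}\cdot2^{-k}$ or a similar summable bound.  The available facts are the following.
\begin{itemize}
\item $|V(Q)|\le\ttr(p)-1<2^{p-1}$ by~\eqref{eq:classical-ramsey}.
\item Every out-neighbourhood and every in-neighbourhood in $Q$ is $\tourn{p-1}$-free.
\item Every set of pairs of coordinates can be described with at most $2^{\binom p2}$ choices.
\end{itemize}
I would bound $N_k$ by an encoding argument, proceeding by induction on $p$.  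Given $x$, pass to a maximal forward chain: the coordinates $i$ such that $x_i\to x_j$ for all later coordinates $j$ in the chain.  The remaining coordinates each have at least one violation charged to them.  Encode $x_1$ with at most $p-1$ bits.  For a coordinate with no violation involving $x_1$, the value lies in $\Nplus(x_1)$, which is $\tourn{p-1}$-free, and the induction hypothesis applies there.  Each coordinate that does violate with $x_1$ costs at most $p-1$ extra bits for its value and about $\log_2 p$ bits for its position.  The goal is to charge every bit to some violated pair, with at most $4p-4$ bits per violation.  This matches the constant: $p-1$ bits for a vertex, doubled for the two endpoints, and doubled again to absorb position and bookkeeping costs.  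The guaranteed violation $W\ge1$ has to pay for the initial $p-1$ bits of $x_1$, and this is exactly where $\tourn p$-freeness enters.

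The main obstacle is this charging step.  Tuples with few violations must be shown to be genuinely rare, even though $Q$ may have up to about $2^{p-1}$ vertices.  The delicate case is $k$ small with all coordinates distinct: then $x$ is ``almost'' a copy of $\tourn p$, and one must show that fixing the few violated pairs together with $O(p)$ bits per violation determines $x$.  If the direct induction loses constants, I would instead induct on $Z$ itself.  Splitting by the value of $x_1$ gives
\[
  Z_p(Q)\le\sum_{v}\;\sum_{x'}2^{-(4p-4)W'(x')}\prod_{j\ge2}\bigl(\mathbf 1[x_j\in\Nplus(v)]+2^{-(4p-4)}\mathbf 1[x_j\notin\Nplus(v)]\bigr),
\]
where $x'=(x_2,\dots,x_p)$ and $W'$ counts violations among coordinates $2,\dots,p$.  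The terms with all $x_j\in\Nplus(v)$ are bounded by $Z_{p-1}(Q[\Nplus(v)])$ using the inductive bound, and the remaining terms are controlled by a $2^{-(4p-4)}$ factor per exceptional coordinate.  The constant $4p-4$ must then be checked to survive the sum over the fewer than $2^{p-1}$ choices of $v$.
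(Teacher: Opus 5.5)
Your argument for $\Phi_Q\ge1$ is correct and is the paper's. The entropy bound, however, is not proved.

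The Gibbs step is valid, but it replaces a statement about product laws by the inequality $Z_p(Q)\le1$. That inequality controls $H-(4p-4)(\log2)\,\mathbb E W$ for \emph{every} law on $V(Q)^p$, correlated ones included, and you never establish it. It has genuine extremal content. Duplicating one entry of the transitive ordering of a copy of $\tourn{p-1}$ in $Q$ gives a $p$-tuple with $W=1$. Hence $Z_p(Q)\le1$ forces every $\tourn p$-free tournament to contain at most $2^{4p-4}/(p-1)$ copies of $\tourn{p-1}$. Nothing in your sketch gives a bound of this kind: the recursion through out-neighbourhoods yields only $\prod_{k=2}^{p}(\ttr(k)-1)=2^{\Theta(p^2)}$. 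It is not clear that the bound is even true.

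This is exactly where both of your proposed routes break.

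\emph{The encoding.} You pay about $p-1$ bits for $x_1$, then about $p-2$ bits for the next chain element inside $\Nplus(x_1)$, and so on, for $\Theta(p^2)$ bits in total. A tuple of the kind above has a single violation to pay for all of them, because the violation ``guaranteed'' at each level of your induction is the same pair.

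\emph{The recursive inequality.} Assume $Z_{p-1}\le1$, and use $W'\ge1$ to gain a factor $2^{-4}$ from the larger exponent. Even so, the main term alone is bounded only by $|V(Q)|/16$, which can be $2^{\Omega(p)}$. A hypothesis of the form $Z_k\le2^{g(k)-\lambda}$ propagates through your step only if $g(p)\ge g(p-1)+\log_2|V(Q)|$, which makes $g$ quadratic.

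\emph{What the paper does instead.} It never leaves product measures. Fix $i$ and $y=(y_j)_{j\ne i}$. The set $A_i(y)$ of values forward with every $y_j$ contains no $\tourn{s+1}$, where $s\le W_i(y)$ is the size of a vertex cover of the violation graph of $y$. Hence $|A_i(y)|\le2^{W_i(y)}$, and $A_i(y)=\emptyset$ when $W_i(y)=0$. Splitting $\mu_i$ along $A_i(y)$ bounds $H(\mu_i)$ for each single $y$. Averaging over $y$ drawn independently of $X_i$ then bounds $\mu_i(V(Q)\setminus A_i(y))$, in expectation, by the expected number of violations at position $i$. This per-coordinate averaging uses independence essentially, and it is what the joint-entropy (Gibbs) reduction gives up.
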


\begin{proof}
Every deterministic $p$-tuple in $Q$ has a nonforward pair, since otherwise
its entries are distinct and induce $\tourn p$.  Taking expectations gives
$\Phi_Q\ge1$.

Fix $i$.  For a choice $y=(y_j)_{j\ne i}$, let $W_i(y)$ be the number of
nonforward pairs among its coordinates, and let $A_i(y)$ be the set of all
$x\in V(Q)$ which make every pair involving position $i$ forward.  Thus
\[
  y_j\to x\quad(j<i),
  \qquad
  x\to y_j\quad(j>i).
\]
We claim that
\begin{equation}\label{eq:compatible-colors}
  |A_i(y)|\le2^{W_i(y)}.
\end{equation}
Indeed, form a graph on the positions other than $i$, joining the endpoints
of every nonforward pair.  Taking one endpoint from each edge gives a vertex
cover $S$ of size $s\le W_i(y)$.  Outside $S$, the entries of $y$ form a
forward, and hence transitive, sequence of length $p-1-s$; its entries are
distinct because equality was counted as nonforward.  If $A_i(y)$
contained a transitive subtournament of order $s+1$, insert its vertices at
position $i$ in their transitive order.  Together with the sequence outside
$S$ this would give $\tourn p$ in $Q$.  Hence $A_i(y)$ contains no
$\tourn{s+1}$.  The elementary bound $\ttr(s+1)\le2^s$ now proves
\eqref{eq:compatible-colors}; when $s=0$, it says that $A_i(y)$ is empty.

Let
\[
  I_i:=\sum_{j<i}\Pr(X_j\not\to X_i)
       +\sum_{j>i}\Pr(X_i\not\to X_j).
\]
For fixed $y$, set $a_i(y)=\mu_i(A_i(y))$.  Splitting $\mu_i$ between
$A_i(y)$ and its complement, using $|V(Q)|<\ttr(p)\le2^{p-1}$ and
\eqref{eq:compatible-colors}, gives
\[
  H(\mu_i)
  \le(\log2)\bigl(1+a_i(y)W_i(y)
                  +(p-1)(1-a_i(y))\bigr).
\]
Zero-mass terms in this entropy decomposition are taken to be zero.
Now choose the coordinates $y_j$ independently according to $\mu_j$.  We
have
\[
  \begin{aligned}
    \mathbb E W_i&=\Phi_Q-I_i,\\
    \mathbb E(1-a_i)
      &=\Pr\bigl(\text{some pair involving $i$ is nonforward}\bigr)
        \le I_i.
  \end{aligned}
\]
Since $0\le a_i\le1$, averaging the preceding entropy bound yields
\[
  H(\mu_i)\le(\log2)\bigl(1+\Phi_Q+(p-2)I_i\bigr).
\]
Finally, $\sum_i I_i=2\Phi_Q$.  Summing over $i$ and using
$\Phi_Q\ge1$ gives
\[
  \sum_iH(\mu_i)
  \le(\log2)\bigl(p+(3p-4)\Phi_Q\bigr)
  \le(4p-4)(\log2)\Phi_Q.
\]
\end{proof}

\begin{proof}[Proof of Theorem~\ref{thm:quantitative-lifts}]
Fix a $\tourn p$-free tournament $Q$ of order $q=\ttr(p)-1$.  We use two
random layered graphs.  In both, take disjoint sets
$V_1,\ldots,V_p$, each of order $n$, and independently include each possible
arc from $V_i$ to $V_j$, $i<j$, with probability $\lambda/n$.  The resulting
oriented graph maps to $\tourn p$ by its layer indices.  In each application
$n$ is taken sufficiently large that $n\ge\lambda$.

First set $\lambda=16p\log q$.  For any map
$f\colon\bigcup_iV_i\to V(Q)$, a uniformly random transversal of the layers
has a nonforward image pair.  Double counting transversals shows that at
least $n^2$ potential arcs are bad for $f$.  The number of selected bad arcs
has mean at least $\lambda n$, so the usual binomial lower-tail estimate gives
\[
  \Pr\bigl(f\text{ has fewer than }\lambda n/2
           \text{ selected bad arcs}\bigr)
  \le e^{-\lambda n/8}.
\]
There are $q^{pn}$ maps, and hence the probability that some map has fewer
than $\lambda n/2$ selected bad arcs is at most
\[
  q^{pn}e^{-\lambda n/8}
  =\exp\bigl(pn\log q-2pn\log q\bigr)
  =q^{-pn}.
\]

Put $D=p\lambda$, and let $C_\ell$ be the number of underlying cycles of
length $\ell$.  Counting cyclic orderings gives
\[
  \mathbb E C_\ell
  \le \frac{(pn)^\ell}{2\ell}
       \left(\frac{\lambda}{n}\right)^\ell
  =\frac{D^\ell}{2\ell}.
\]
Since $D\ge2$, the number $Y=\sum_{\ell=3}^h C_\ell$ satisfies
\[
  \mathbb EY
  \le\sum_{\ell=3}^h\frac{D^\ell}{2\ell}
  \le\frac{D^h}{6(1-D^{-1})}
  \le\frac{D^h}{3}.
\]
Take
\begin{equation}\label{eq:order-construction-n}
  n=\left\lceil8D^h/\lambda\right\rceil.
\end{equation}
Then $n\ge\lambda$, and Markov's inequality gives
\[
  \Pr(Y\ge\lambda n/2)
  \le\frac{2D^h}{3\lambda n}\le\frac1{12}.
\]
Together with the failure probability $q^{-pn}<1/4$ above, this gives
an outcome in which every map to $Q$ has at least $\lambda n/2$ bad
arcs and $Y<\lambda n/2$.  Delete one arc from each cycle counted by
$Y$.  Every map retains a bad arc, and the remaining graph has underlying
girth greater than $h$.

By~\eqref{eq:classical-ramsey}, $\lambda=16p\log q=O(p^2)$, so
\[
  pn\le8p^{h+1}\lambda^{h-1}+p=O_h(p^{3h-1}).
\]
As in the proof of Theorem~\ref{thm:high-girth}, a weak component that
does not map to $Q$ belongs to $\mathcal H_g(p)$.
This proves the bound on $m_g(p)$.

For the degree bound, repeat the construction with
\[
  \lambda=24p.
\]
For a map $f$, let $\mu_i$ be the empirical distribution of its values on
$V_i$, and abbreviate
$\Phi=\Phi_Q(\mu_1,\ldots,\mu_p)$.  Exactly $n^2\Phi$ potential arcs are bad
for $f$.  Thus
\begin{equation}\label{eq:type-lower-tail}
  \Pr\bigl(f\text{ has fewer than }\lambda n\Phi/2
           \text{ selected bad arcs}\bigr)
  \le e^{-\lambda n\Phi/8}.
\end{equation}
For a fixed empirical profile $(\mu_1,\ldots,\mu_p)$, the number of maps
having that profile is at most
\[
  \exp\left(n\sum_iH(\mu_i)\right).
\]
Lemma~\ref{lem:entropy-violation} and~\eqref{eq:type-lower-tail} show that the
probability that some map with this profile has fewer than
$\lambda n\Phi/2$ selected bad arcs is at most
\[
  \exp\left(-c_p n\Phi\right),
  \qquad
  c_p:=3p-(4p-4)\log2>0.
\]
There are at most $(n+1)^{pq}$ empirical profiles.  Since $\Phi\ge1$, the
probability that some map violates the desired lower bound is at most
\[
  (n+1)^{pq}e^{-c_pn},
\]
which tends to zero because $p$ and $q$ are fixed while $n$ tends to
infinity.  Hence, with probability tending to one, every map to $Q$ has at
least
\[
  \lambda n\Phi/2\ge\lambda n/2
\]
selected bad arcs.

Set $D=(p-1)\lambda$, and again let $Y$ count the underlying cycles
of length at most $h$.  Since $p/(p-1)\le3/2$, the same calculation gives
\[
  \mathbb EY\le K_hD^h,
  \qquad K_h=\frac13\left(\frac32\right)^h.
\]
If $Z$ is the total degree of a fixed vertex in the original random graph,
then
\[
  Z\sim\operatorname{Bin}\left((p-1)n,\frac{\lambda}{n}\right),
  \qquad \mathbb EZ=D.
\]
The size-biased identity followed by the Chernoff bound gives
\[
  \mathbb E\bigl[Z\mathbf1_{\{Z>4D\}}\bigr]
  =D\Pr\left(
    \operatorname{Bin}\left((p-1)n-1,\frac{\lambda}{n}\right)
       \ge4D\right)
  \le De^{-2D}.
\]
Let $W$ be the number of selected arcs incident with a vertex whose original
degree exceeds $4D$.  Summing the last estimate over all $pn$ vertices gives
\[
  \mathbb EW\le pnDe^{-2D}.
\]
Since $D=(p-1)\lambda=24p(p-1)$, Markov's inequality yields
\[
  \Pr(W>\lambda n/16)
  \le16p(p-1)e^{-2D}<\frac14.
\]
Also, for all sufficiently large $n$,
\[
  \Pr(Y>\lambda n/16)
  \le\frac{16K_hD^h}{\lambda n}<\frac14,
\]
and the probability that some map has fewer than $\lambda n/2$
selected bad arcs is less than $1/4$.  The union bound gives an outcome
satisfying all three properties.  Choose one arc from every cycle counted by $Y$, and delete
these arcs together with all arcs counted by $W$.  At most $\lambda n/8$
arcs are deleted.  The remaining graph has underlying girth greater than $h$,
maximum total degree at most $4D$, and still does not map to $Q$, since every
map originally had at least $\lambda n/2$ bad arcs.

As in the proof of Theorem~\ref{thm:high-girth}, a weak component $C$
that does not map to $Q$ belongs to $\mathcal H_g(p)$.  Since its
maximum total degree is at most $4D$,
\[
  d_g(p)\le4D=96p(p-1).
\]
\end{proof}

The dependence on the girth threshold cannot be polynomial, even when
$p$ is fixed.

\begin{lemma}\label{lem:girth-order-lower}
Let $g\ge4$ be an integer, and let $H$ be an acyclic oriented graph with
$p(H)=4$, $\tau(H)=8$, and $\girth(U(H))>g$.  Then
\[
  |V(H)|\ge3\cdot2^{\lfloor g/4\rfloor}-2.
\]
In particular,
\[
  m_g(4)\ge3\cdot2^{\lfloor g/4\rfloor}-2,
\]
and $m_g(4)=2^{\Theta(g)}$ as $g\to\infty$.  Consequently, $m_g(p)$ has
no upper bound polynomial jointly in $p$ and $g$.
\end{lemma}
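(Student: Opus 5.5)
The plan is to use minimality together with the special structure of the unique $\tourn4$-free tournament of order $7$. Since $\ttr(4)=8$, the condition $\tau(H)=8$ means that $H\nrightarrow Q$, where $Q$ is the quadratic residue (Paley) tournament on $\mathbb Z_7$. Replace $H$ by a subgraph $H'$ that still does not map to $Q$ and is minimal under deletion of vertices and arcs. Subgraphs keep the girth bound, and $|V(H')|\le|V(H)|$, so it suffices to bound $|V(H')|$ from below. By minimality, $H'$ is connected, and $H'-x\to Q$ for every vertex or arc $x$. The properties of $Q$ I will use are that it is doubly regular: every vertex has in- and outdegree $3$, and for any two distinct vertices $a,b$ each of the four patterns (common out-neighbour, common in-neighbour, $a\to x\to b$, $b\to x\to a$) is realized by exactly one vertex $x$.

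The first step is to show that $H'$ has minimum total degree at least $2$. A leaf $v$ can be re-attached to any homomorphism of $H'-v$, because every vertex of $Q$ has both an in-neighbour and an out-neighbour. This contradicts minimality, so there are no leaves. The second, key step is to show that no two vertices of total degree $2$ are adjacent in $H'$. Suppose they were, giving a thread $u - v_1 - v_2 - w$ in which $v_1,v_2$ have degree $2$ and the three arcs carry some fixed orientations. Take a homomorphism $f$ of $H'-\{v_1,v_2\}$ to $Q$. I would show that for every pair $a=f(u)$, $b=f(w)$ (possibly $a=b$) and every one of the $8$ orientation patterns of a $3$-arc path, $Q$ contains a walk from $a$ to $b$ with that pattern.

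The walk claim is checked pattern by pattern.
\begin{itemize}
\item For $a\ne b$: choose the image $y$ of $v_2$ among the three in- or out-neighbours of $b$ required by the last arc. Then apply the four-pattern property to the pair $(a,y)$ to find the image of $v_1$. This only needs $y\ne a$, and at least two of the three choices for $y$ satisfy it.
\item For $a=b$: the patterns are closed walks of length $3$ through $a$. Here one uses that every vertex lies on a directed triangle, together with the regularity of $Q$.
\end{itemize}
This finite verification is where I expect the main obstacle: one must be sure that no orientation pattern with $a=b$ fails. The pattern $a\to x\to y\to a$ is one such case, and it needs a directed triangle through $a$. It is mechanical but must be done carefully, and it can be cross-checked by computer on $\mathbb Z_7$. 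Granting it, $f$ extends to $H'$, contradicting $H'\nrightarrow Q$.

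With these two facts, every vertex of degree $2$ in $H'$ has both neighbours of degree at least $3$, and $H'$ has a vertex $r$ of degree at least $3$. Otherwise $H'$ would be a cycle, contradicting girth greater than $g\ge4$ together with the degree-$2$ fact. Put $k=\lfloor g/4\rfloor$. Since $\girth(U(H'))>g$, the ball of radius $\lfloor g/2\rfloor\ge2k$ around $r$ induces a tree.

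In this tree, define branching levels. Level $0$ is $\{r\}$. Each branching vertex at level $j<k$ has at least two children, or three for the root, and each child either branches or passes through one degree-$2$ vertex to a branching vertex. So every branching vertex at level $j+1$ lies within distance $2$ of its parent on level $j$, all levels up to $k$ lie in the tree ball, and level $j\ge1$ contains at least $3\cdot2^{j-1}$ distinct vertices. Summing over levels gives
\[
  |V(H)|\ge|V(H')|\ge1+\sum_{j=1}^k3\cdot2^{j-1}=3\cdot2^k-2.
\]

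For the last assertions, combine this lower bound with the upper bound of Theorem~\ref{thm:quantitative-lifts} at $p=4$. Inspecting its proof, $n=\lceil8D^h/\lambda\rceil$ with $D,\lambda$ constants when $p=4$, so $m_g(4)\le 2^{O(h)}=2^{O(g)}$, which gives $m_g(4)=2^{\Theta(g)}$. An upper bound for $m_g(p)$ that is polynomial jointly in $p$ and $g$ would give, at $p=4$, a bound polynomial in $g$, contradicting $m_g(4)\ge3\cdot2^{\lfloor g/4\rfloor}-2$.
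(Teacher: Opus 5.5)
Your proposal is correct in outline and follows the paper's proof. The shared steps are: a minimal subgraph not mapping to $Q$; no vertices of degree at most one; no two adjacent degree-two vertices, by extending over a three-arc path; a Moore-type count; and the upper bound from the first construction of Theorem~\ref{thm:quantitative-lifts}. Your branching levels are the breadth-first levels of the paper's graph $K$, obtained by suppressing degree-two vertices. Three local repairs are needed.

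First, the case $a=b$ is not an obstacle and needs no computer check. Your $a\ne b$ argument applies verbatim: every in- or out-neighbour $y$ of $b=a$ is automatically distinct from $a$, and the required two-arc pattern between the distinct vertices $a,y$ then exists. This is exactly how the paper handles coinciding endpoint images.

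Second, the four patterns are not each realized exactly once. If $a\to b$, there are two vertices $x$ with $b\to x\to a$. For example, with $i\to j$ iff $j-i\in\{1,2,4\}$ in $\mathbb Z_7$, $a=0$ and $b=1$, these vertices are $3$ and $5$. Only existence is used, and it follows directly from $\tourn4$-freeness: every vertex has in- and outdegree three, and each neighbourhood induces a cyclic triangle. So you do not need the uniqueness of the Paley tournament.

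Third, girth greater than $g$ does not make the ball of radius $\lfloor g/2\rfloor$ induce a tree when $g$ is even. For $g=4$ the Petersen graph is a counterexample, and the same failure can occur for radius $2k$ when $g=4k$. What you need is weaker and true. A non-backtracking walk from $r$ of length at most $2k$ repeats no vertex, and two distinct such walks end at distinct vertices. Otherwise $U(H')$ would contain a cycle of length at most $4k\le g$. This makes the branching vertices you count distinct, and the bound $3\cdot2^{k}-2$ follows as you wrote.
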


\begin{proof}
Since $H\to\tourn4$ and $\tau(H)=8$, there is a $\tourn4$-free
seven-vertex tournament $Q$ such that $H\nrightarrow Q$.  Every vertex
of $Q$ has indegree and outdegree three: four in- or out-neighbors would
contain a transitive triangle and hence yield a copy of $\tourn4$.
Each in- and out-neighborhood therefore induces a cyclic triangle.

Every orientation of a two-edge path maps to $Q$ with arbitrarily
prescribed distinct endpoint images.  To see this, suppose $x\to y$ and
write $\Nplus(x)=\{y,u,v\}$ with $y\to u\to v\to y$.  Then $u$ is a
common out-neighbor of $x,y$, and $x\to v\to y$.  Reversing all arcs
gives a common in-neighbor.  The other two out-neighbors of $y$ lie in
$\Nminus(x)$, so there is also a directed two-arc path from $y$ to $x$.
These are the four possible orientations.

Consequently, every orientation of a three-edge path maps to $Q$ with
arbitrarily prescribed endpoint images $a,b$, which may coincide.  Choose
the image $c$ of the vertex next to the first endpoint in $\Nplus(a)$ or
$\Nminus(a)$, as required by the first edge, with $c\ne b$.  This is
possible because each neighborhood has three vertices.  The remaining
two-edge path then maps to $Q$ with endpoint images $c,b$.

Choose an induced subgraph $H'$ of minimum order such that
$H'\nrightarrow Q$.  The graph $U(H')$ has minimum degree at least two,
since a homomorphism extends over a vertex of degree at most one.  Moreover,
its degree-two vertices are independent.  Otherwise, two adjacent such
vertices, together with their other neighbors, form a three-edge path;
the endpoints are distinct because $\girth(U(H'))>g\ge4$.  Delete the two
internal vertices, map the remaining graph to $Q$ by minimality, and extend
using the preceding claim, a contradiction.

Suppress every degree-two vertex of $U(H')$, replacing its two incident
edges by an edge between its neighbors, and call the resulting graph $K$.
It is nonempty and has minimum degree at least three.  No loop or parallel
edge arises, since $U(H')$ is simple and has no triangle or four-cycle.
Each edge of $K$ corresponds to a path of at most two edges in $U(H')$,
so
\[
  \girth(K)>g/2.
\]
Put $t=\lfloor g/4\rfloor$.  Since $\girth(K)>2t$, a breadth-first
search from any vertex has at least three vertices at distance one and
at least twice as many vertices at each subsequent distance through $t$.
Consequently,
\[
  |V(H)|\ge |V(K)|\ge1+3\sum_{i=0}^{t-1}2^i
  =3\cdot2^t-2.
\]

The classical value $\ttr(4)=8$~\cite[Section~1]{ErdosMoser1964}
shows that this lower bound applies to every member of $\mathcal H_g(4)$.

For the upper bound, take $p=4$ in the first construction of
Theorem~\ref{thm:quantitative-lifts}.  Then $q=7$,
$\lambda=64\log7$, and $D=256\log7$.  With $h=\max\{g,12\}$,
the choice~\eqref{eq:order-construction-n} gives
\[
  m_g(4)\le4n\le128D^{h-1}+4<D^h.
\]
For $g\ge12$, we have $h=g$, so the two bounds give
$m_g(4)=2^{\Theta(g)}$.
\end{proof}

\section{Forbidden oriented trees}\label{sec:forbidden-trees}

An acyclic oriented graph is $F_{0,3}$-free exactly when its maximum
outdegree is at most two.  Thus the pure-claw bound $\tau(H)=O(p(H)^4)$
for nonempty graphs follows from~\cite[Section~3]{GrzesikEtAl2024}; arc reversal gives the
$F_{3,0}$ case.  We now consider mixed claws.

\begin{lemma}\label{lem:claw}
Let $H$ be an acyclic oriented graph with $p=p(H)\ge2$.
If $H$ is $F_{1,2}$-free or $F_{2,1}$-free, then $\tau(H)\le2p-2$.
\end{lemma}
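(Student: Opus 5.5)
By arc reversal, which preserves both $p(H)$ and $\tau(H)$, it suffices to treat the case where $H$ is $F_{1,2}$-free. Then every vertex with an in-neighbor has outdegree at most one: otherwise that vertex, one in-neighbor and two out-neighbors would form a copy of $F_{1,2}$. So if $S$ is the set of sources of $H$, the induced subgraph $H-S$ has maximum outdegree at most one. Being acyclic, $H-S$ is a disjoint union of in-trees, and each vertex $v\notin S$ has a well-defined \emph{depth} $d(v)$: the length of the unique maximal directed path starting at $v$ and following out-arcs inside $H-S$.

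Next I bound the depths. Suppose $v\notin S$ has depth $d(v)$, so there is a directed path of order $d(v)+1$ starting at $v$. Since $v$ is not a source, it has an in-neighbor $u$, and prepending $u$ gives a directed path of order $d(v)+2$. Acyclicity makes this a genuine path, so $d(v)+2\le p$, that is, $d(v)\le p-2$. I also record the possible arcs of $H$:
\begin{itemize}
\item no arc enters a source;
\item every arc leaving a source $s$ ends in $V(H)\setminus S$;
\item an arc $u\to w$ with $u,w\notin S$ is the unique out-arc of $u$, so $d(u)=d(w)+1$.
\end{itemize}

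Now let $T$ be any tournament of order $2p-2$; since $p\ge2$, it is nonempty. Its average outdegree is $(2p-3)/2$, so some vertex $x$ satisfies $\dplus(x)\ge p-1$. By R\'edei's theorem, the subtournament on $\Nplus(x)$ has a Hamiltonian path, and hence contains a directed path $y_{p-2}\to y_{p-3}\to\cdots\to y_0$ of order $p-1$. Define $f(s)=x$ for every $s\in S$, and $f(v)=y_{d(v)}$ for every $v\notin S$; this is well defined because $d(v)\le p-2$. An arc $u\to w$ inside $H-S$ maps to $y_{d(w)+1}\to y_{d(w)}$, which is an arc of the path. An arc from a source maps to $x\to y_j$, which is an arc since $y_j\in\Nplus(x)$. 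No other arcs exist, so $f$ is a homomorphism and $\tau(H)\le 2p-2$.

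The only step needing care is the depth bound $d(v)\le p-2$ for non-sources, together with the observation that arcs inside $H-S$ go exactly from depth $d+1$ to depth $d$. This is what lets the whole in-forest fit on a path of order $p-1$ inside a single out-neighborhood of one vertex. Without it, the degree count would only give a path of order $p-1$, which is one short of what depths up to $p-1$ would require.
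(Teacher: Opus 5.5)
Your proof is correct, but it takes a different route from the paper's proof of this lemma. The paper also deletes the set $S$ of sources and uses the same depth function on the in-forest $H-S$. However, it reduces depths modulo $3$, which gives a homomorphism $H\to T_0=\{s\}\Rightarrow\vec C_3$ to a fixed four-vertex tournament. It then argues that a tournament of order $2p-2$ either contains $T_0$, or has all out-neighborhoods transitive. In the second case a vertex of maximum outdegree spans a $\vec T_p$, and $H$ maps there by the rank map.

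You avoid this case split. The depth bound $d(v)\le p-2$, obtained from the in-neighbor of a non-source and acyclicity, lets you map $H-S$ onto a directed path of order $p-1$. R\'edei's theorem then places such a path inside any out-neighborhood of size at least $p-1$. This is exactly the mechanism the paper uses for the one-turn path in Theorem~\ref{thm:one-turn}, where the remainder is an out-forest instead of an in-forest. So your version treats the two cases uniformly and needs neither the rank map nor transitivity of out-neighborhoods.

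In exchange, the paper's version exhibits a target independent of $p$. It also shows that $H$ maps to every tournament containing $T_0$ or $\vec T_p$, and this class contains every tournament with a vertex of outdegree at least $p-1$. Numerically, both arguments give the same bound $2p-2$.
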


\begin{proof}
Suppose that $H$ is $F_{1,2}$-free, and let $S$ be its set of sources.
Every vertex outside $S$ has an in-neighbor.  If it had two out-neighbors,
these vertices would form a copy of $F_{1,2}$.  Hence
\begin{equation}\label{eq:functional-outdegree}
  \Delta^+(H-S)\le1.
\end{equation}
For each $v\in V(H-S)$, let $d(v)$ be the length of the unique maximal
directed path starting at $v$.  Every arc $uv$ satisfies
$d(u)=d(v)+1$.  Giving $\dcycle3$ vertex set $\mathbb Z_3$ and arcs
$i\to i-1$, the map $v\mapsto d(v)\pmod3$ is a homomorphism
$H-S\to\dcycle3$.  Mapping every source to a new vertex $s$ dominating
the triangle gives
\begin{equation}\label{eq:mixed-claw-target}
  H\longrightarrow T_0:=\{s\}\Rightarrow\dcycle3.
\end{equation}

Every tournament of order $2p-2$ contains $T_0$ or $\tourn p$.  Indeed, if
it contains no $T_0$, the out-neighborhood of every vertex contains no
directed triangle and is therefore transitive.  A vertex of maximum outdegree has at least $p-1$
out-neighbors, and it together with any $p-1$ of them gives $\tourn p$.
Now~\eqref{eq:rank-map} and~\eqref{eq:mixed-claw-target} show that $H$ maps
to every tournament of order $2p-2$.  Arc reversal gives the $F_{2,1}$
case.
\end{proof}

For the directed orientation of $P_4$, exclusion gives $p(H)\le3$,
and hence $\tau(H)\le\ttr(3)\le4$ by~\eqref{eq:universal-upper}
and~\eqref{eq:classical-ramsey}.  For the one-turn orientation, deleting
the sources leaves an out-forest.

\begin{theorem}\label{thm:one-turn}
If $H$ is $\Fturn$-free, acyclic, and $p(H)\ge2$, then
\[
  \tau(H)\le2p(H)-2.
\]
\end{theorem}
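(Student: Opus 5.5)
The plan is to follow the proof of Lemma~\ref{lem:claw}. I will show that $H$ maps to the four-vertex tournament $T_0=\{s\}\Rightarrow\dcycle3$. I will then reuse the fact, established in that proof, that every tournament of order $2p-2$ contains $T_0$ or $\tourn p$. Here $p=p(H)$, and the second alternative is handled by the rank map~\eqref{eq:rank-map}.

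Let $S$ be the set of sources of $H$. The key structural claim is that every vertex of $H-S$ has at most one in-neighbor inside $H-S$.

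Suppose instead that $c$ has two distinct in-neighbors $b,d\notin S$. Since $b$ is not a source, it has an in-neighbor $a$, and $a\neq c$ by acyclicity.
\begin{enumerate}
  \item If $a\ne d$, then $a\to b\to c\leftarrow d$ is a copy of $\Fturn$ on four distinct vertices.
  \item If $a=d$, we have $d\to b$, $d\to c$ and $b\to c$. Since $d$ is not a source, it has an in-neighbor $e$.
  \begin{enumerate}
    \item $e\ne b$, because $d\to b$ and there are no opposite arcs.
    \item $e\ne c$, because $d\to c$ and $H$ is acyclic.
  \end{enumerate}
  Hence $e\to d\to c\leftarrow b$ is a copy of $\Fturn$.
\end{enumerate}
In both cases we contradict the hypothesis. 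This case split, in particular the degenerate case in which the two in-neighbors sit in a transitive triangle, is the only delicate point.

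It follows that $H-S$ is acyclic with all indegrees at most one, so it is an out-forest. For $v\in V(H-S)$, let $d(v)$ be the depth of $v$ in its rooted tree, meaning its distance from the root. Every arc $uv$ of $H-S$ then satisfies $d(v)=d(u)+1$. Put arcs $i\to i+1$ on $\dcycle3$ with vertex set $\mathbb Z_3$. Then $v\mapsto d(v)\bmod 3$ is a homomorphism $H-S\to\dcycle3$.

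Next I send every source to the dominating vertex $s$. Every arc of $H$ either lies in $H-S$ or leaves a source: no arc enters a source, and an arc between two sources is impossible. Arcs leaving a source therefore go to $s$'s out-neighbors or, if both ends are sources, cannot occur. So this extension gives $H\to T_0$.

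Finally, let $T$ be any tournament of order $2p-2$. If $T$ contains $T_0$, we are done by the previous step. Otherwise $T$ contains $\tourn p$, as shown in the proof of Lemma~\ref{lem:claw}: every out-neighborhood is then transitive, and a vertex of maximum outdegree has at least $p-1$ out-neighbors. In that case $H\to\tourn p\to T$ via~\eqref{eq:rank-map}. Hence $\tau(H)\le 2p-2$.

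The hypothesis $p\ge2$ ensures that $2p-2\ge p$, so that the $\tourn p$ alternative is meaningful.
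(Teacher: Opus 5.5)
Your proof is correct, but it finishes by a different route from the paper.

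Both arguments first show that $H-S$ is an out-forest. The paper avoids your case split by letting $x$ be the earlier of the two in-neighbours in a topological order, so that its predecessor cannot be the other one. That is exactly the switch from $b$ to $d$ that your case~(2) makes by hand.

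After that the paper does not use $T_0$. It maps $H-S$ to $\dpath{q}$ by longest-path ranks, where $q=p(H-S)\le p-1$. In a tournament of order $2q$ it then sends all sources to a vertex $t$ of outdegree at least $q$, and sends $H-S$ to a directed path of order $q$ inside $\Nplus(t)$, found by R\'edei's theorem. You instead take depth modulo $3$ in the out-forest and reduce to the Ramsey-type fact already proved in Lemma~\ref{lem:claw}.

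Your version buys a target independent of $p$: every $\Fturn$-free acyclic graph maps to the fixed tournament $T_0=\{s\}\Rightarrow\dcycle3$, just as $F_{1,2}$-free graphs do. The in-forest of that lemma is simply replaced by an out-forest, so the claw and one-turn cases become a single argument and R\'edei's theorem is not needed. The paper's version is self-contained and uses only an outdegree count and a Hamiltonian path. Both give the same bound $2p-2$, since a longest path of $H$ starts at a source and hence $q=p-1$.
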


\begin{proof}
Let $S$ be the set of sources of $H$ and put $K=H-S$.
Suppose that distinct $x,y\in V(K)$ both send an arc to a vertex $v$.
Choose $x$ before $y$ in a topological ordering of $H$.
Since $x$ is not a source, it has a predecessor $z$.
The vertices $z,x,v,y$ are distinct, and
$z\to x\to v\leftarrow y$ is a copy of $\Fturn$, a contradiction.
Thus every vertex of $K$ has indegree at most one.

Put $q=p(K)$.  The graph $K$ is nonempty because $p(H)\ge2$.
The first vertex of a longest directed path in $K$ has a predecessor
in $H$, which cannot lie on that path by acyclicity.  Hence
$q\le p(H)-1$.  The longest-path ranks in $K$ increase by exactly one
along every arc, since each vertex has at most one predecessor.
Thus $K\to\dpath q$.

Let $T$ be a tournament on $2q$ vertices.  Some vertex $t$ has at least
$q$ out-neighbors, among which there is a directed path of order $q$.
Map $K$ to this path and every vertex of $S$ to $t$.
Since $S$ is independent and every arc incident with $S$ points into $K$,
this gives $H\to T$.  Consequently,
\[
  \tau(H)\le2q\le2p(H)-2.
\]
\end{proof}

When $p(H)=1$, the graph is edgeless and $\tau(H)=1$.

It remains to consider the alternating orientation.  We first treat
triangle-free underlying graphs and then reduce the general case to this
one.  Both arguments use the following lemma.

\begin{lemma}\label{lem:local-rigidity}
Let $H$ be $\Falt$-free, and let $u\to v$ be an arc such that
$\dplus(u)\ge2$ and $\dminus(v)\ge2$.  There is a unique vertex $w$ such
that
\[
  \Nplus(u)=\{v,w\},
  \qquad
  \Nminus(v)=\{u,w\},
  \qquad
  u\to w\to v.
\]
Moreover, $\dminus(w)=\dplus(w)=1$.
\end{lemma}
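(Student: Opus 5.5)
The plan is to exploit the forbidden pattern $\Falt$: $a\to b\leftarrow c\to d$ with four distinct vertices. Given the arc $u\to v$, any in-neighbor $x\neq u$ of $v$ and any out-neighbor $y\neq v$ of $u$ give the configuration $y\leftarrow u\to v\leftarrow x$. Reading it as $x\to v\leftarrow u\to y$, this is exactly a copy of $\Falt$ with $a=x$, $b=v$, $c=u$, $d=y$, unless two of the four vertices coincide. Since $x\neq u$, $x\neq v$, $y\neq u$ and $y\neq v$ hold automatically, the only possible coincidence is $x=y$.

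So I first show that every $x\in\Nminus(v)\setminus\{u\}$ equals every $y\in\Nplus(u)\setminus\{v\}$. Both sets are nonempty by the degree hypotheses $\dplus(u)\ge2$ and $\dminus(v)\ge2$. Hence both sets are equal to the same singleton $\{w\}$. This gives $\Nplus(u)=\{v,w\}$ and $\Nminus(v)=\{u,w\}$, and $w\in\Nplus(u)\cap\Nminus(v)$ gives $u\to w\to v$. Uniqueness is immediate from these set equalities.

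For the degree claim, I first show that $w$ has no out-neighbor $z\neq v$. Suppose it did. Then $z\neq u$, since $u\to w$ and $H$ is oriented. Apply the same observation to the arc $u\to w$ and the out-neighbor $v$ of $u$: the pattern $z\leftarrow w\leftarrow u\to v$ needs the other orientation. Instead, use the path $u\to v\leftarrow w\to z$. This is $\Falt$ with $a=u$, $b=v$, $c=w$, $d=z$, and the four vertices are distinct since $z\neq u,v,w$. This is a contradiction, so $\dplus(w)=1$.

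Symmetrically, suppose $w$ had an in-neighbor $z\neq u$. Then $z\neq v$, and $z\to w\leftarrow u\to v$ is a copy of $\Falt$ with $a=z$, $b=w$, $c=u$, $d=v$. This is again a contradiction, so $\dminus(w)=1$.

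The only delicate point is checking distinctness in each of the three $\Falt$ copies. Distinctness relies on $H$ being oriented (so that $z\neq u$ when $u\to w\to z$, and $z\neq v$ when $z\to w\to v$) and on the choice $z\neq v$ (respectively $z\neq u$). Acyclicity is not needed beyond orientedness.
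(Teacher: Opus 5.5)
Your proof is correct and is essentially the paper's argument: it uses the same three copies of $\Falt$, namely $x\to v\leftarrow u\to y$, $u\to v\leftarrow w\to z$, and $z\to w\leftarrow u\to v$, and your explicit distinctness checks via orientedness fill in what the paper leaves implicit. The aborted sentence about the pattern $z\leftarrow w\leftarrow u\to v$ is a false start and should simply be deleted.
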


\begin{proof}
Choose
\[
  x\in\Nplus(u)\setminus\{v\},
  \qquad
  y\in\Nminus(v)\setminus\{u\}.
\]
If $x\ne y$, then $y\to v\leftarrow u\to x$ is a copy of $\Falt$.
Thus both sets equal $\{w\}$, proving the neighborhood equalities and
the arcs $u\to w\to v$.  If $z\to w$ for $z\ne u$, then
$z\to w\leftarrow u\to v$ is a copy of $\Falt$.  Similarly,
$w\to z$ with $z\ne v$ gives $u\to v\leftarrow w\to z$.  Thus $u$ and
$v$ are the unique in- and out-neighbors of $w$.
\end{proof}

If $Q$ is a directed multigraph, its directed line graph $L(Q)$ has one
vertex for each arc of $Q$, with $e\to f$ exactly when the head of $e$ is
the tail of $f$.  Put
\[
  \mathcal S_m=L(\tourn m).
\]
Equivalently, the vertices of $\mathcal S_m$ are the pairs $(i,j)$ with
$1\le i<j\le m$, and $(i,j)\to(j,k)$ whenever $i<j<k$.
For line digraphs, shift graphs, and induced forbidden oriented paths,
see~\cite[Section~4.1]{AboulkerEtAl2018} and the references there.
Here we give the representation under non-induced exclusion and apply
it to compressibility.

\begin{lemma}\label{lem:shift-universal}
For every $n\ge1$ and every tournament $T$ of order $n$, there is a
homomorphism
\[
  \mathcal S_{n+1}\longrightarrow T.
\]
\end{lemma}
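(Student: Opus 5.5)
We argue by strong induction on $n$, splitting $T$ at a single vertex into its out- and in-neighbourhoods. A homomorphism into each part comes from the induction hypothesis, and the arcs of $\mathcal S_{n+1}$ that cross between the two index blocks are sent to the splitting vertex. To make the recursion uniform we allow $n=0$: then $T$ is empty and $\mathcal S_1$ has no vertices, since there is no pair $1\le i<j\le1$, so the empty map is a homomorphism. For $n=1$, $\mathcal S_2$ is the single vertex $(1,2)$ with no arcs and maps to the unique vertex of $T$. Both cases are also produced by the general step below.

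For the inductive step, let $T$ have order $n\ge1$ and fix any vertex $t$. Put $A=\Nplus(t)$ and $B=\Nminus(t)$, and let $a=|A|$ and $b=|B|$, so that $a+b=n-1$. By induction there are homomorphisms
\[
  g_B\colon\mathcal S_{b+1}\to T[B],
  \qquad
  g_A\colon\mathcal S_{a+1}\to T[A],
\]
where an empty part is handled by the case $n=0$. Since $(b+1)+(a+1)=n+1$, we split the index set $[n+1]$ into a lower block $L=\{1,\dots,b+1\}$ and an upper block $U=\{b+2,\dots,n+1\}$. We then define $f\colon\mathcal S_{n+1}\to T$ as follows:
\begin{itemize}
\item $f(i,j)=g_B(i,j)$ if $i,j\in L$;
\item $f(i,j)=g_A(i-b-1,\,j-b-1)$ if $i,j\in U$;
\item $f(i,j)=t$ if $i\in L$ and $j\in U$.
\end{itemize}

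It remains to check every arc $(i,j)\to(j,k)$ with $i<j<k$. There are four cases according to the blocks containing $i,j,k$.
\begin{itemize}
\item If $i,j,k\in L$, the arc is preserved by $g_B$.
\item If $i,j,k\in U$, the arc is preserved by $g_A$ after the index shift.
\item If $i,j\in L$ and $k\in U$, the image is $g_B(i,j)\to t$, which is an arc because $g_B(i,j)\in B=\Nminus(t)$.
\item If $i\in L$ and $j,k\in U$, the image is $t\to g_A(\cdot,\cdot)$, which is an arc because the target lies in $A=\Nplus(t)$.
\end{itemize}
No arc joins two crossing pairs, since a crossing pair $(i,j)$ has $j\in U$, so any successor $(j,k)$ has both indices in $U$. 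Hence $f$ is a homomorphism.

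The only real obstacle is finding the right decomposition. Natural first attempts fail: one might take $f(i,j)$ to depend only on $i$ or only on $j$, or extend by adding a single new extreme index. Each of these forces the images to follow a transitive order, which a general tournament need not contain. Splitting at a vertex $t$ and using the block structure of $\mathcal S_{n+1}$ avoids this, because crossing pairs only ever need to sit between $B$ and $A$.
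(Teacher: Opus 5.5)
Your proof is correct and is essentially the paper's argument: split at a vertex, recursively map the lower index block into the in-neighbourhood and the upper block into the out-neighbourhood, and send every crossing pair to the splitting vertex. The only cosmetic difference is that you handle an empty side through an explicit $n=0$ case, whereas the paper simply notes that an empty side has no pairs to map.
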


\begin{proof}
We argue by induction on $n$.  The assertion is immediate for $n=1$.
Choose $x\in V(T)$ and put
\[
  A=\Nminus_T(x),
  \qquad B=\Nplus_T(x),
  \qquad |A|=a,
  \qquad |B|=b.
\]
Partition $[n+1]$ into consecutive intervals $I,J$ of orders $a+1$ and
$b+1$.  By induction, map the pairs contained in $I$ to $T[A]$ and those
contained in $J$ to $T[B]$; an empty side has no pairs to map.  Map every
crossing pair $(i,j)$, $i\in I$ and $j\in J$, to $x$.

For an arc $(i,j)\to(j,k)$, either all indices lie in one interval or its
two images lie in $A,\{x\}$ in that order, or in $\{x\},B$ in that order.
Since $A\to x\to B$, all arcs are preserved.
\end{proof}

The preceding lemma also gives
\begin{equation}\label{eq:line-compressibility}
  \tau(L(Q))=p(L(Q))=p(Q)-1
\end{equation}
for every acyclic directed multigraph $Q$ with at least one arc.
Indeed, $L(Q)$ is acyclic, and its directed paths correspond to directed
paths in $Q$ with one more vertex; acyclicity prevents repetitions.
Put $m=p(Q)\ge2$.  The longest-path rank map $Q\to\tourn m$ induces
$L(Q)\to\mathcal S_m$, so Lemma~\ref{lem:shift-universal} gives
$\tau(L(Q))\le m-1=p(L(Q))$.  The reverse inequality follows
from~\eqref{eq:universal-lower}.

\begin{theorem}\label{thm:triangle-free-exact}
Let $H$ be a nonempty $\Falt$-free acyclic oriented graph whose underlying
graph is triangle-free.  Then
\[
  \tau(H)=p(H).
\]
\end{theorem}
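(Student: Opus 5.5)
The plan is to show that $H$ is a subgraph of a line digraph $L(Q)$ of some acyclic directed multigraph $Q$ with $p(Q)=p(H)+1$. Then \eqref{eq:line-compressibility} and monotonicity of $\tau$ under subgraphs give $\tau(H)\le\tau(L(Q))=p(Q)-1=p(H)$. Together with \eqref{eq:universal-lower} this yields equality. Equivalently and more directly, it suffices to construct a homomorphism $H\to\mathcal S_{m}$ with $m=p(H)+1$, and then invoke Lemma~\ref{lem:shift-universal}.

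First, Lemma~\ref{lem:local-rigidity} applies to every arc $u\to v$ with $\dplus(u)\ge2$ and $\dminus(v)\ge2$. It produces $w$ with $u\to w\to v$, so $u,w,v$ span a triangle in $U(H)$. Since $U(H)$ is triangle-free, no such arc exists. Hence every arc $u\to v$ satisfies $\dplus(u)=1$ or $\dminus(v)=1$. This is the key local condition. It says that the relation ``$e$ and $f$ share a tail, or share a head'' groups arcs into bicliques.

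Next I build $Q$. Take as vertices of $Q$ the equivalence classes of the following relation on ``half-vertices''. For each vertex $v$ of $H$, introduce an in-port $v^-$ and an out-port $v^+$. Identify $u^+$ with $v^-$ whenever $u\to v$. Let $Q$ have one arc $e_v$ from the class of $v^-$ to the class of $v^+$ for each vertex $v$ of $H$. Then $u\to v$ in $H$ implies that the head of $e_u$ equals the tail of $e_v$, so $v\mapsto e_v$ is a homomorphism $H\to L(Q)$.

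The main obstacle is proving that $Q$ is acyclic with $p(Q)=p(H)+1$. Here the identification classes can merge many vertices, since the class of $u^+$ contains $v^-$ for every out-neighbour $v$, and then $x^+$ for every in-neighbour $x$ of each such $v$, and so on. A class is a connected component of the bipartite ``arc graph'' linking out-ports to in-ports. Such a component is a set of arcs of $H$ joined through shared tails or shared heads.

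I would show that each component is a complete bipartite set: every tail in it sends arcs to every head in it. Suppose $u\to v$ and $u'\to v$ and $u\to v'$ with $u'\ne u$ and $v'\ne v$. Then $\dplus(u)\ge2$ and $\dminus(v)\ge2$ hold simultaneously, which is excluded. So each component is a star: either one tail with several heads, or one head with several tails.

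Consequently a directed path $v_1\to v_2\to\cdots$ in $Q$ lifts to a directed walk $u_1\to u_2\to\cdots$ in $H$. Consecutive arcs $e_{u_i},e_{u_{i+1}}$ meet at a class in which $u_i^+$ and $u_{i+1}^-$ lie in the same star, so $u_i\to u_{i+1}$. Acyclicity of $H$ then makes $Q$ acyclic, and $p(Q)-1\le p(H)$. The reverse inequality is automatic from the homomorphism.

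Finally, \eqref{eq:line-compressibility} gives $\tau(L(Q))=p(Q)-1\le p(H)$. Since $H\to L(Q)$, we get $\tau(H)\le\tau(L(Q))$, completing the proof.
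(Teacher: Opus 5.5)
Your proposal is correct and follows essentially the paper's proof: the same port graph whose components are stars, the same multigraph $Q$ with one arc $e_v\colon[v^-]\to[v^+]$ per vertex, and the same appeal to \eqref{eq:line-compressibility}. The only difference is cosmetic: the paper uses the equivalence $[u^+]=[v^-]\iff u\to v$ to conclude $H=L(Q)$ outright, whereas you use a homomorphism $H\to L(Q)$ plus path lifting, which is that same equivalence applied along paths.
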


\begin{proof}
By Lemma~\ref{lem:local-rigidity}, every arc $u\to v$ satisfies
$\dplus(u)=1$ or $\dminus(v)=1$.  Form a bipartite graph $B$ on the
ports $\{v^-:v\in V(H)\}$ and $\{v^+:v\in V(H)\}$, with edge
$u^+v^-$ exactly when $u\to v$.  Each edge has an endpoint of degree
one, so every component is a star or an isolated vertex.

Let $[z]$ denote the component containing a port $z$.  Construct a directed
multigraph $Q$ whose vertices are the components of $B$, with one arc
$e_v:[v^-]\to[v^+]$ for each $v\in V(H)$.  Opposite-side ports in one
component are adjacent, so
\[
  [u^+]=[v^-]
  \quad\Longleftrightarrow\quad u^+v^-\in E(B)
  \quad\Longleftrightarrow\quad u\to v.
\]
Taking $u=v$ shows that $Q$ has no loops, and the same equivalence gives
$H=L(Q)$.  A directed cycle in $Q$ would give one in $H$, so $Q$ is
acyclic.  Since $H$ is nonempty, $Q$ has an arc, and
\eqref{eq:line-compressibility} gives $\tau(H)=p(H)$.
\end{proof}

We now remove the triangle-free assumption.  Call an arc $u\to v$ of an $\Falt$-free graph $H$ \emph{diagonal} if
\[
  \dplus_H(u)\ge2
  \qquad\text{and}\qquad
  \dminus_H(v)\ge2.
\]
By Lemma~\ref{lem:local-rigidity}, every diagonal arc has a unique
\emph{middle vertex} $w$ with $u\to w\to v$.  Let $M=M(H)$ be the set of
all middle vertices.

\begin{lemma}\label{lem:middle-vertices}
Distinct diagonal arcs have distinct middle vertices.  No vertex of $M$ is
an endpoint of a diagonal arc, and $M$ is independent.  If $w\in M$ is the
middle vertex of $u\to v$, then the only arcs incident with $w$ are
$u\to w$ and $w\to v$.
\end{lemma}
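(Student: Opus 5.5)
Everything will be read off Lemma~\ref{lem:local-rigidity}, which already pins down the neighborhood of a middle vertex. So I will first record what that lemma gives for a fixed diagonal arc $u\to v$ with middle vertex $w$: $\Nplus(u)=\{v,w\}$, $\Nminus(v)=\{u,w\}$, and $\dminus(w)=\dplus(w)=1$. The last equality already proves the final assertion, namely that the only arcs at $w$ are $u\to w$ and $w\to v$. The remaining claims should follow from the statement that $w$ has in- and outdegree exactly one.

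\emph{Distinct middle vertices.} Suppose $w$ is the middle vertex of two diagonal arcs $u\to v$ and $u'\to v'$. The unique in-neighbor of $w$ is $u$, and by the same lemma applied to the second arc it is also $u'$. Likewise the unique out-neighbor of $w$ is both $v$ and $v'$. Hence the two arcs coincide.

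\emph{No middle vertex is an endpoint of a diagonal arc.} Suppose $w\in M$ were the tail of a diagonal arc. Then $\dplus(w)\ge2$, contradicting $\dplus(w)=1$. If instead $w$ were its head, then $\dminus(w)\ge2$, contradicting $\dminus(w)=1$.

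\emph{Independence of $M$.} Let $w,w'\in M$ be adjacent, say $w\to w'$, where $w$ is the middle vertex of $u\to v$. Since the only out-neighbor of $w$ is $v$, we get $w'=v$. But then $w'$ is the head of the diagonal arc $u\to v$, which the previous step excludes. The case $w'\to w$ is symmetric: there $w'$ equals the tail $u$ of the diagonal arc whose middle vertex is $w$.

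No step is a real obstacle, since each reduces to the degree-one property from Lemma~\ref{lem:local-rigidity}. The only point needing care is to apply that lemma separately to each diagonal arc, so that the uniqueness statements are used correctly.
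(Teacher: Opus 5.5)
Your proposal is correct and follows essentially the same route as the paper: every claim is read off from the fact, given by Lemma~\ref{lem:local-rigidity}, that a middle vertex has exactly one in-neighbor and one out-neighbor. Independence then follows, as in the paper, because the only neighbors of a middle vertex are the endpoints of its own diagonal, and these cannot lie in $M$.
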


\begin{proof}
By Lemma~\ref{lem:local-rigidity}, a middle vertex $w$ has a unique
in-neighbor $u$ and out-neighbor $v$.  These determine its diagonal
$u\to v$, so distinct diagonals have distinct middle vertices.
Since $\dminus(w)=\dplus(w)=1$, $w$ cannot be an endpoint of a diagonal.
Its only neighbors are the endpoints of its own diagonal, which lie
outside $M$.  Thus $M$ is independent.
\end{proof}

\begin{lemma}\label{lem:delete-middle}
Let $H$ be an $\Falt$-free acyclic oriented graph.  Then the graph
$G=H-M(H)$ is $\Falt$-free and $U(G)$ is triangle-free.
\end{lemma}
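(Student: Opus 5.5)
The plan has two parts. First, $G$ is $\Falt$-free, which is immediate: $G$ is a subgraph of $H$, and a copy of $\Falt$ in $G$ would also be one in $H$. The real content is that $U(G)$ is triangle-free. Since $H$ is acyclic, every triangle of $U(G)$ is transitive. So suppose, for contradiction, that $G$ contains vertices $u,w,v$ with arcs $u\to w$, $w\to v$, $u\to v$.

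The strategy is to show that $u\to v$ is a diagonal arc of $H$ whose middle vertex is $w$; then $w\in M(H)$, contradicting $w\in V(G)$.

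In $H$ we have $\Nplus_H(u)\supseteq\{v,w\}$ and $\Nminus_H(v)\supseteq\{u,w\}$, and these are distinct vertices. Hence $\dplus_H(u)\ge2$ and $\dminus_H(v)\ge2$, so $u\to v$ is diagonal in $H$.

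Lemma~\ref{lem:local-rigidity} then gives a unique vertex $w'$ with $\Nplus_H(u)=\{v,w'\}$ and $\Nminus_H(v)=\{u,w'\}$. Since $w\in\Nplus_H(u)\setminus\{v\}$, we get $w'=w$. So $w$ is the middle vertex of the diagonal $u\to v$, and $w\in M(H)$, which is the desired contradiction.

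The only subtle point is that diagonality, and the definition of $M$, refer to degrees in $H$ rather than in $G$. This is why the argument works with the triangle as a subgraph of $H$ and applies Lemma~\ref{lem:local-rigidity} in $H$, where its hypothesis ($H$ is $\Falt$-free) holds. No appeal to degrees in $G$ is needed. Lemma~\ref{lem:middle-vertices} is not required here, although it confirms that deleting $M$ removes exactly one vertex per diagonal.
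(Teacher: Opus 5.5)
Your proof is correct and follows the paper's argument. Every triangle is transitive by acyclicity, its source-to-sink arc is diagonal in $H$, and Lemma~\ref{lem:local-rigidity} identifies the third vertex as that diagonal's middle vertex, so the vertex lies in $M(H)$ and is deleted; your explicit remark that diagonality is measured by degrees in $H$ rather than $G$ is the right care to take and is implicit in the paper's shorter proof.
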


\begin{proof}
The graph $G$ is $\Falt$-free because it is a subgraph of $H$.
Since $H$ is acyclic, every triangle is transitive.  Its source-to-sink
arc is diagonal, and its remaining vertex lies in $M(H)$ by
Lemma~\ref{lem:local-rigidity}.  Thus deleting $M(H)$ removes every
triangle.
\end{proof}

For a tournament $T$, define its \emph{transition graph} $\Gamma(T)$ on
$V(T)$ as follows: if $x\to y$ in $T$, then $xy$ is an edge of
$\Gamma(T)$ exactly when there is no $z$ with $x\to z\to y$.  An
independent set in $\Gamma(T)$ therefore has the property that every
tournament arc within it can be replaced by a two-arc directed path in $T$.

\begin{lemma}\label{lem:transition-graph}
For every tournament $T$,
\[
  \Delta(\Gamma(T))\le2,
  \qquad
  \alpha(\Gamma(T))\ge
  \left\lceil\frac{|V(T)|}{3}\right\rceil.
\]
\end{lemma}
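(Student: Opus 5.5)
The plan is to show first that each vertex of $\Gamma(T)$ has at most one out-neighbor and at most one in-neighbor along $\Gamma$-edges; the independence bound then follows from the structure of graphs of maximum degree two.

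\textbf{Degree bound.} Fix $x$ and consider its $\Gamma$-neighbors $y$ with $x\to y$. Suppose $y_1,y_2$ are two such out-neighbors, and say $y_1\to y_2$ in $T$. Then $x\to y_1\to y_2$, so $xy_2$ is not an edge of $\Gamma(T)$, a contradiction. Hence $x$ has at most one $\Gamma$-neighbor in $\Nplus_T(x)$. Symmetrically, take two in-neighbors $z_1,z_2$ of $x$ with $z_1\to z_2$. Then $z_1\to z_2\to x$, so $z_1x\notin E(\Gamma(T))$. Hence $x$ has at most one $\Gamma$-neighbor in $\Nminus_T(x)$. Since every $\Gamma$-edge is a $T$-arc, this gives $\Delta(\Gamma(T))\le2$. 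In fact, orienting each $\Gamma$-edge as in $T$ yields a digraph in which every vertex has in- and outdegree at most one.

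\textbf{Independence bound.} This oriented structure makes each component of $\Gamma(T)$ either a path or a cycle, and every cycle is directed consistently with $T$. A path on $k$ vertices has an independent set of size $\lceil k/2\rceil\ge\lceil k/3\rceil$. A cycle on $k$ vertices has one of size $\lfloor k/2\rfloor$. The danger is the triangle, where $\lfloor 3/2\rfloor=1=k/3$ exactly; for $k\ge4$ we have $\lfloor k/2\rfloor\ge k/3$. Summing over components, every component $C$ contributes at least $|C|/3$. Since $\alpha$ is an integer, we get $\alpha\ge\lceil |V(T)|/3\rceil$. Here we use that $\sum_C\alpha(C)\ge\sum_C|C|/3$, and an integer at least $n/3$ is at least $\lceil n/3\rceil$.

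\textbf{Main obstacle.} The only step needing care is the directed-degree claim: I must check that the two-step paths constructed really use a third vertex distinct from the endpoints. They do, since $y_1\ne y_2$ and both differ from $x$. The rest is the standard computation for paths and cycles. No cycle of length two can occur, because $\Gamma(T)$ is simple.
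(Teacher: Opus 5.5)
Your proof is correct and matches the paper's. The key step is the same: orienting $\Gamma(T)$ as in $T$ gives in- and outdegree at most one, argued exactly as you do. For the independence bound, the paper takes the largest class of a greedy $3$-coloring (valid since $\Delta\le2$); this avoids your path/cycle case analysis and yields the same $\lceil |V(T)|/3\rceil$.
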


\begin{proof}
Orient $\Gamma(T)$ as in $T$.  If $x\to y$ and $x\to z$ were two
outgoing arcs, assume $y\to z$.  Then $x\to y\to z$ contradicts
$xz\in E(\Gamma(T))$.  Thus the outdegree is at most one, and reversal
gives the same bound for the indegree.  Hence $\Delta(\Gamma(T))\le2$.
A greedy $3$-coloring has a color class of size at least
$\lceil |V(T)|/3\rceil$.
\end{proof}

\begin{theorem}\label{thm:alternating-general}
Let $H$ be a nonempty $\Falt$-free acyclic oriented graph, let $M=M(H)$,
and put $q=p(H-M)$.  Then
\[
  \tau(H)\le3q-2\le3p(H)-2.
\]
\end{theorem}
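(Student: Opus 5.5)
We write $G=H-M$. By Lemma~\ref{lem:delete-middle}, $G$ is $\Falt$-free and $U(G)$ is triangle-free. Theorem~\ref{thm:triangle-free-exact} therefore gives $\tau(G)=p(G)=q$ whenever $G$ is nonempty. In fact the proof of that theorem gives more: $G=L(Q)$ for an acyclic multigraph $Q$ with $p(Q)=q+1$, so $G\to\mathcal S_{q+1}$. The inequality $q\le p(H)$ is trivial. It therefore suffices to show that $H$ maps to every tournament $T$ of order $3q-2$. The idea is to map $G$ into a part of $T$ on which every $T$-arc can be subdivided, and then send each middle vertex to a midpoint of the corresponding subdivided arc.

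First fix $T$ with $|V(T)|=3q-2$. By Lemma~\ref{lem:transition-graph}, $\Gamma(T)$ has an independent set $I$ of size at least $\lceil(3q-2)/3\rceil=q$. Let $T'=T[I]$, a tournament of order at least $q$. Every arc $x\to y$ of $T'$ is a non-edge of $\Gamma(T)$, so there is some $z\in V(T)$ with $x\to z\to y$.

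Next we need $G\to T'$. Because $\tau(G)=q\le |I|$, such a homomorphism $f$ exists; if $G$ is edgeless, any map into $I$ works. Now extend $f$ to $M$. Let $w\in M$ be the middle vertex of the diagonal $u\to v$. Then $u$ and $v$ are not in $M$ by Lemma~\ref{lem:middle-vertices}, so both lie in $G$. The diagonal arc $u\to v$ is itself an arc of $G$: it is not deleted, since neither endpoint is in $M$. Hence $f(u)\to f(v)$ is an arc of $T'$, and we may choose $z$ with $f(u)\to z\to f(v)$ and set $f(w)=z$. By Lemma~\ref{lem:middle-vertices}, the only arcs at $w$ are $u\to w$ and $w\to v$, and $M$ is independent, so all arcs of $H$ are preserved. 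This gives $H\to T$, so $\tau(H)\le 3q-2$.

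The main subtlety I expect is the boundary case. If $G$ has no arcs, then there are no diagonals and $M$ is empty, so $H=G$ is edgeless with $q=1$ and $\tau=1=3q-2$, which is consistent. One also has to check that $q\ge1$ always holds, so that $3q-2\ge1$. The only other points needing care are that $G$ is genuinely nonempty when invoking Theorem~\ref{thm:triangle-free-exact}, and that the diagonal arc survives in $G$; both follow from Lemma~\ref{lem:middle-vertices}. The main step is the combination of $\tau(G)=p(G)$ with the transition-graph independent set.
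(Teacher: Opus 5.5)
Your proposal is correct and follows the paper's proof essentially step for step. You use Lemma~\ref{lem:delete-middle} and Theorem~\ref{thm:triangle-free-exact} to get $\tau(G)=q$, map $G$ into the subtournament on a size-$q$ independent set of $\Gamma(T)$ given by Lemma~\ref{lem:transition-graph}, and send each middle vertex to an intermediate vertex of the image of its diagonal arc, with Lemma~\ref{lem:middle-vertices} guaranteeing that all arcs are preserved. Your side remark that $G\to\mathcal S_{q+1}$ is true but unused, and the separate edgeless case is already covered by the general argument.
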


\begin{proof}
Since $H$ is nonempty and every middle vertex has neighbors outside
$M$, the graph $G=H-M$ is nonempty.  Lemma~\ref{lem:delete-middle} and
Theorem~\ref{thm:triangle-free-exact} give
\begin{equation}\label{eq:tau-G}
  \tau(G)=q.
\end{equation}
Let $T$ be a tournament of order $3q-2$.  Lemma~\ref{lem:transition-graph}
gives an independent set $S$ of order $q$ in $\Gamma(T)$, and
\eqref{eq:tau-G} gives a homomorphism $\varphi:G\to T[S]$.

For each middle vertex $w$ of a diagonal $u\to v$, both endpoints lie
in $G$.  Since $S$ is independent and $\varphi(u)\to\varphi(v)$, choose
$z_w$ with $\varphi(u)\to z_w\to\varphi(v)$ and set
$\varphi(w)=z_w$.  By Lemma~\ref{lem:middle-vertices}, $M$ is independent
and the only arcs incident with $w$ are $u\to w$ and $w\to v$.
Thus this extends $\varphi$ to a homomorphism $H\to T$.
Hence $\tau(H)\le3q-2$, and $q\le p(H)$
gives the second inequality.
\end{proof}

Together with the claw, directed-path, and one-turn cases proved above,
this completes the proof of Theorem~\ref{thm:four-vertex-trees}.

\section{Concluding remarks}\label{sec:sharpness}

For $q\ge2$, define $H_q$ on
\[
  x_1,\ldots,x_q,
  \qquad
  y_1,\ldots,y_{q-1}
\]
by adding, for $1\le i<q$, the arcs
\begin{equation}\label{eq:Hq-arcs}
  x_i\to x_{i+1},
  \qquad
  x_i\to y_i,
  \qquad
  y_i\to x_{i+1}.
\end{equation}
These triangle chains are the graphs of Wang and
Lu~\cite[Section~4]{WangLu2025} with $k=2q-1$ and
$S=\{1,3,\ldots,2q-3\}$.  Their concluding remarks and the square-path
upper bound recalled there give
\begin{equation}\label{eq:Hq-known}
  \tau(H_q)=3q-2.
\end{equation}
They attain the bound in Theorem~\ref{thm:alternating-general}
expressed in terms of $p(H-M)$.

In a copy of $\Falt$, the central arc has a tail of outdegree at least
two and a head of indegree at least two.  In $H_q$, only the arcs
$x_i\to x_{i+1}$ have both properties.  The two outside vertices would
then both have to be $y_i$, a contradiction.  Thus $H_q$ is $\Falt$-free.

The directed path
\[
  x_1\to y_1\to x_2\to y_2\to\cdots\to y_{q-1}\to x_q
\]
contains every vertex, so $p(H_q)=2q-1$.  Its middle-vertex set is
$M=\{y_1,\ldots,y_{q-1}\}$, and deleting it leaves the directed path
$x_1\to\cdots\to x_q$.  Hence $p(H_q-M)=q$, and
\eqref{eq:Hq-known} gives equality in the bound
$\tau(H)\le3p(H-M)-2$.  In terms of $p(H)$, it gives
\[
  \tau(H_q)=\frac{3p(H_q)-1}{2},
\]
so the coefficient in a general linear bound for $\Falt$-free graphs
must be at least $3/2$.

The construction suggests a refinement of the general alternating bound.
For a nonempty $\Falt$-free acyclic graph $H$, let $M=M(H)$ and $G=H-M$.  Define
$r(H)$ to be the maximum number of diagonal arcs on a directed path of $G$.
If such a path has $k$ vertices and contains $s$ diagonal arcs, replacing
each diagonal by its two-arc path through the corresponding middle vertex
produces a directed path of $H$ with $k+s$ vertices.  Since $k\ge s+1$,
\begin{equation}\label{eq:r-bound}
  r(H)\le\left\lfloor\frac{p(H)-1}{2}\right\rfloor.
\end{equation}

We conjecture that every nonempty $\Falt$-free acyclic oriented graph
$H$ satisfies
\[
  \tau(H)\le p(H)+r(H).
\]
By~\eqref{eq:r-bound}, this would imply
\[
  \tau(H)\le\left\lfloor\frac{3p(H)-1}{2}\right\rfloor.
\]
It contains the triangle-free equality as the case $r(H)=0$ and would be
sharp for $H_q$, where $r(H_q)=q-1$.
Another known equality case follows from Wang and
Lu~\cite[Corollary~3.3]{WangLu2025}.  Start with the directed path
$v_1\to\cdots\to v_k$ and add $v_i\to v_{i+2}$ for $i\in S$, where
distinct elements of $S\subseteq\{1,\ldots,k-2\}$ differ by at least
three.  Their result gives $\tau(H)=k+|S|$.  These graphs are
$\Falt$-free, with $p(H)=k$ and $r(H)=|S|$, so they satisfy
$\tau(H)=p(H)+r(H)$.

It remains to determine which larger oriented forests $F$ make the
$F$-free acyclic class polynomially $\tau$-bounded.  Another open case is
the class of acyclic oriented graphs with maximum outdegree
three~\cite{GrzesikEtAl2024}.

For the graphs in Theorem~\ref{thm:high-girth}, can polynomial order in
$p$ for fixed $g$ and maximum semidegree $O(p^2)$ be attained
simultaneously?  Can the quadratic degree bound be reduced to a linear
bound?  Lemma~\ref{lem:degree-lower} gives the lower bound
$\Omega(p/(\log p)^2)$, while Lemma~\ref{lem:girth-order-lower} rules out
an order bound polynomial jointly in $p$ and $g$.

\end{document}